\newcommand{\x}{\times}
\newcommand{\p}{\partial}
\newcommand{\e}{\mathrm{e}}
\renewcommand{\d}{\mathrm{d}}
\newcommand{\D}{\mathrm{D}}
\newcommand{\Lap}{\mathrm{\Delta}}
\newcommand{\eqnb}{\begin{equation}}
\newcommand{\eqnbs}{\begin{equation*}}
\newcommand{\eqnes}{\end{equation*}}
\newcommand{\eqne}{\end{equation}}
\newcommand{\re}[1]{(\ref{#1})}
\newcommand{\comment}[1]{}
\newcommand{\oneChar}{\hspace{11pt}}
\newcommand{\RR}{\mathbb{R}}
\newcommand{\TT}{\mathbb{T}}
\newcommand{\PP}{\mathbb{P}}
\newcommand{\ZZ}{\mathbb{Z}}
\newcommand{\NN}{\mathbb{N}}
\newcounter{lem}
\newcounter{AppendixLemmaNumber}
\newcounter{TempCounter1}
\newtheorem{proposition}{Proposition}
\newtheorem{theorem}{Theorem}
\newtheorem{lemma}[lem]{Lemma}
\begin{document}
\title{An Eulerian-Lagrangian Form for the Euler Equations in Sobolev Spaces}
\author{
Benjamin C. Pooley \footnote{
	BCP is supported by an EPSRC Doctoral Training Award.} \footnote {B.C.Pooley@warwick.ac.uk} \footnote{Mathematics Institute, University of Warwick, Coventry, CV4 7AL, UK} \and James C. Robinson  \footnote{JCR was supported by an EPSRC Leadership Fellowship, grant EP/G007470/1.}  \footnote{J.C.Robinson@warwick.ac.uk} \footnotemark[3]}
\maketitle


\begin{abstract}
In 2000 Constantin showed that the incompressible Euler equations can be written in an ``Eulerian-Lagrangian'' form which involves the back-to-labels map (the inverse of the trajectory map for each fixed time). In the same paper a local existence result is proved in certain H\"older spaces $C^{1,\mu}$. 

We review the Eulerian-Lagrangian formulation of the equations and prove that given initial data in $H^s$ for $n\geq2$ and $s>\frac{n}{2}+1$,  a unique local-in-time solution exists on the $n$-torus that is continuous into $H^s$ and $C^1$ into $H^{s-1}$. These solutions automatically have $C^1$ trajectories. 

The proof here is direct and does not appeal to results already known about the classical formulation. Moreover, these solutions are regular enough that the classical and Eulerian-Lagrangian formulations are equivalent, therefore what we present amounts to an alternative approach to some of the standard theory.
\end{abstract}

\section{Introduction}
We study a reformulation (following Constantin \cite{Const_EL_Local_2000}) of the incompressible Euler equations on a domain ${\TT^n}:=\RR^n/2\pi\ZZ^n$ in the absence of external forcing. The Euler equations model the flow of an incompressible inviscid fluid and are (classically) formulated in terms of a divergence-free vector field $u$ (i.e. $\nabla\cdot u=0$) as follows:
\eqnb\label{eqEulerClassical}
\frac{\p u}{\p t}+(u\cdot\nabla)u+\nabla p =0
\eqne
where $p$ is a scalar potential representing internal pressure (as opposed to physical pressure at a boundary). The divergence-free condition reflects the incompressibility constraint. 

In two and particularly in three dimensions, these equations continue to be of great interest; some recent surveys include \cite{Const_Euler_2007,Gibbon_2008, Yudovich_2006}. As an illustration of the challenge posed by these equations we note that unlike the Navier--Stokes equations where global weak solutions have been known to exist since 1934 due to Leray \cite{Leray_1934}, existence of global weak solutions of the Euler equations (on periodic domains) was not proved until 2011 by Wiedemann \cite{Wiedemann_2011}, following the work of DeLellis and Sz\'ekelyhidi \cite{DeLellis_Szekelyhidi_2010}. On the spatial domain $\RR^3$, more regular local solutions ($u\in C^0([0,T];H^s)\cap C^1([0,T];H^{s-2})$ with $s>5/2$) have been known to exist since the 1970s due to Kato et al, see for example \cite{Kato_1972, Kato1974}.

In the study of the Navier--Stokes equations, results such as those found in \cite{JCR_WS_2009} motivate us to approach the classical equations of fluid mechanics from a more Lagrangian viewpoint. In that paper, Robinson and Sadowski show that if $u$ is a suitable weak solution of the Navier--Stokes equations in 3D in the sense of Caffarelli, Kohn and Nirenberg \cite{C_K_N_1982}, then almost every particle trajectory is unique and $C^1$ in time. The arguments there are based on the fact that almost all trajectories avoid the set of points $(x,t)$ where singularities could develop using the fact that the set of such points has box-counting dimension at most $5/3$. 

Constantin has studied a form for the Euler equations that involves both the classical velocity field and the so called back-to-labels map $A$ which is defined to be the inverse of the trajectory map $X$ at each time $t$. More precisely, for an evolving vector field $u$ defined on ${\TT^n}\x[0,T]$, the trajectory map solves
\eqnb\label{eqXdefn}
\left\{
\begin{array}{l}
\dfrac{\d X}{\d t}(y,t)=u(X(y,t),t)\\
\\
X(y,0)=y
\end{array}
\right.
\eqne
for each $y\in{\TT^n}$. If $u$ is divergence-free and sufficiently regular then $X$ is well defined and $X(\cdot,t)$ is bijective for each $t$. In this case we can define the back-to-labels map $A$ by setting
\eqnb\label{eqAdefn}
A(\cdot,t)\coloneqq X^{-1}(\cdot,t),
\eqne
 where we consider $X$ as a map $X(\cdot,t):{\TT^n}\rightarrow{\TT^n}$ for each $t\in[0,T]$. For the \textit{Eulerian-Lagrangian} form, as we shall continue to call it, Constantin \cite{Const_EL_Local_2000} proved local existence and uniqueness results in certain H\"older spaces on $\RR^3$ for solutions that are periodic, or satisfy suitable decay conditions.

As Yudovich \cite{Yudovich_2006} has noted, a similar combination of Eulerian and Lagrangian approaches was used to investigate the Euler equations in H\"older spaces, by G\"unther and Lichtenstein independently, as early as the 1920s (\cite{Lichtenstein_1927}, \cite{Gunther_1926}). 

First we will review the Eulerian-Lagrangian formulation and discuss how it is formally equivalent to the usual Euler equations. We then turn to the main topic of this paper which is the proof of an existence and uniqueness result for the Eulerian-Lagrangian formulation in $C^0([0,T];H^{s}(\TT^n))$ with $s>\frac{n}{2}+1$ in dimension $n\geq2$. The proof is self contained, in the sense that it neither appeals to results about the classical Euler equations, nor to the problem in H\"older spaces.   

\section{The Eulerian-Lagrangian form of the equations}
The Eulerian-Lagrangian form of the Euler equations comprises the following system:
\eqnb\label{eqELA}
\p_tA+(u\cdot\nabla)A=0,
\eqne
\eqnb\label{eqELu}
u=\PP((\nabla A)^\ast v),
\eqne
\eqnb\label{eqELv}
\p_tv+(u\cdot\nabla)v=0.
\eqne
Given an initial divergence-free velocity $u_0$ for the classical equations, we choose initial conditions for the above system as follows:
\eqnb\label{eqELICA}
A(x,0)=x,
\eqne
\eqnb\label{eqELICu}
u(x,0)=v(x,0)=u_0(x).
\eqne
We use the notation $\PP$ for the Leray projector onto the space of divergence-free functions. For a matrix $M$, $M^\ast$ denotes the transposed matrix. The vector field $v$ is called the \textit{virtual velocity} and represents the initial velocity transported by the flow. 

It will often be convenient to treat $A$ as a perturbation of the identity map on ${\TT^n}$. In this case we use the notation $\eta(x,t)\coloneqq A(x,t)-x$ and replace \re{eqELA} and \re{eqELICA} with the equations
\eqnb\label{eqELeta}
\p_t\eta +(u\cdot\nabla)\eta+u=0,\oneChar \eta(x,0)=0
\eqne
respectively. We do this because the identity map (hence $A$) does not have sufficient Sobolev regularity when considered as a function on the torus with values in $\RR^n$ (i.e. without accounting for the topology of the target torus ).

The following proposition encapsulates the derivation of \re{eqELu}  (sometimes called the Weber formula) which can be found in \cite{Const_EL_Local_2000}. 
\begin{proposition}\label{propWeber}
Let $n\geq 2$, consider $u\in C^1((0,T)\x{\TT^n})$, with $u(0)\in C^1({\TT^n})$. If $u$ is divergence-free and satisfies  \re{eqEulerClassical} for some $p$, with spatially periodic boundary conditions then $A\in C^1((0,T)\x{\TT^n};{\TT^n})$ and $u$ satisfies \re{eqELu} with $v(x,t)=u_0(A(x,t))$.
\end{proposition}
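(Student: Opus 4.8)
The plan is to construct the flow map $X$ and the back-to-labels map $A$ from classical ODE theory, verify that $A$ and the virtual velocity $v:=u_0\circ A$ satisfy the transport equations \re{eqELA} and \re{eqELv}, and then derive the Weber formula \re{eqELu} by transporting the Euler momentum balance along particle trajectories and pulling the resulting identity back to Eulerian variables. Throughout I will use that \re{eqEulerClassical} forces $\nabla p=-\p_tu-(u\cdot\nabla)u$ to be continuous on $(0,T)\x\TT^n$, so $p$ may be taken $C^1$.

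\emph{The maps $X$ and $A$.} Since $u$ is $C^1$ in space it is Lipschitz in $x$ uniformly in $t$ (using periodicity, together with the value $u(0)$ at the endpoint), so \re{eqXdefn} has for each $y$ a unique solution on all of $[0,T)$ by Picard--Lindel\"of and compactness of $\TT^n$, and $C^1$ dependence on the initial point makes $X$ jointly $C^1$ in $(y,t)$; moreover its spatial derivative $\nabla_yX$ solves the variational equation $\p_t(\nabla_yX)=(\nabla u)(X,t)\,\nabla_yX$, $\nabla_yX(\cdot,0)=\id$. Because $\nabla\cdot u=0$, Liouville's formula gives $\det\nabla_yX\equiv1$, so the flow map $X(\cdot,t)$ is a $C^1$ diffeomorphism of $\TT^n$; its inverse $A(\cdot,t)$ is $C^1$, and joint $C^1$ regularity of $A$ on $(0,T)\x\TT^n$ follows from the inverse function theorem and the $C^1$ dependence of $X$. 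As with the identity map, $A$ cannot be written globally as an $\RR^n$-valued function on $\TT^n$, but $\nabla A$ is globally well defined, so \re{eqELu} makes sense. Differentiating $A(X(y,t),t)=y$ in $t$ and using \re{eqXdefn} yields \re{eqELA}; since $v(x,t):=u_0(A(x,t))$ is constant along trajectories, the same chain rule yields \re{eqELv}, and $v(x,0)=u_0(x)$.

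\emph{The Weber identity.} Set $\ell(y,t):=u(X(y,t),t)$. By \re{eqEulerClassical}, $\p_t\ell=-(\nabla p)(X,t)$, while the variational equation gives $\p_t(\p_{y_i}X_k)=\p_{y_i}\ell_k$; hence
\eqnbs
\frac{\d}{\d t}\Big(\sum_k\ell_k\,\p_{y_i}X_k\Big)=-\sum_k(\p_kp)(X,t)\,\p_{y_i}X_k+\sum_k\ell_k\,\p_{y_i}\ell_k=\p_{y_i}\Big(\tfrac12|\ell|^2-p(X,t)\Big).
\eqnes
Integrating in $t$ from $0$, using $X(y,0)=y$ and $\ell(y,0)=u_0(y)$ and differentiating under the integral sign, gives $\sum_k\ell_k(y,t)\,\p_{y_i}X_k(y,t)=u_{0,i}(y)+\p_{y_i}\Phi(y,t)$, where $\Phi(y,t)$ is the time integral of the bracketed Bernoulli-type function. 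Now set $y=A(x,t)$, so that $x=X(y,t)$ and $\ell(y,t)=u(x,t)$; from $A\circ X=\id$ one has $\nabla_xA(x,t)=(\nabla_yX(y,t))^{-1}$. Contracting the last identity with $\nabla_xA$ and using the chain rule $\p_{x_j}[\Phi(A(x,t),t)]=\sum_i\p_{x_j}A_i\,\p_{y_i}\Phi$ collapses it to $u=(\nabla A)^\ast v+\nabla\phi$ with $v=u_0\circ A$ and $\phi(x,t):=\Phi(A(x,t),t)$. Applying the Leray projector and using $\PP u=u$ (as $u$ is divergence-free) and $\PP\nabla\phi=0$ gives \re{eqELu}.

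I expect the main obstacle to be the regularity bookkeeping in the first step: verifying that the flow and its inverse are $C^1$ jointly in $(x,t)$ on the stated set, that $\nabla A$ is globally defined on $\TT^n$ despite the target-torus topology, and that the $t\to0^+$ behaviour (where only $u(0)\in C^1$ is assumed) is controlled enough to start the ODE and to justify differentiating the time integral above. The algebraic core is essentially the classical Cauchy-invariants computation and is routine once the Lagrangian bookkeeping is in place; the only genuinely delicate algebraic step is the matrix inversion $\nabla_xA=(\nabla_yX)^{-1}$ and its interaction with the adjoint in $(\nabla A)^\ast v$.
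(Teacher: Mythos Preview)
Your argument is correct and follows essentially the same route as the paper's proof: both establish the regularity and invertibility of $X$ via $\det\nabla X\equiv1$ and the inverse function theorem, then derive the key identity $\p_t\big(\sum_k\ell_k\,\p_{y_i}X_k\big)=\p_{y_i}\big(\tfrac12|\ell|^2-\tilde p\big)$ (the paper writes $\ell=\p_tX$), integrate in time, pull back via $y=A(x,t)$ using $\nabla_xA=(\nabla_yX)^{-1}$, and apply the Leray projector. Your additional explicit verification of the transport equations \re{eqELA} and \re{eqELv} and your remarks on the target-torus topology go slightly beyond what the paper spells out here, but the core computation is the same.
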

\begin{proof}
From the regularity assumptions on $u$ and periodicity of the domain we deduce that the trajectories $X(y,\cdot)\in C^2(0,T)$ and $\nabla X(y,\cdot)\in C^1(0,T)$ for all $y\in{\TT^n}$, we also have $X, \frac{\p X}{\p t}\in C^1((0,T)\x{\TT^n})$. It follows from the divergence-free condition that $\det \nabla X \equiv 1$, so $X$ is volume preserving and locally injective, hence bijective, given that ${\TT^n}$ has finite volume. By the inverse function theorem we see that $A$ exists and is an element of $C^1((0,T)\x{\TT^n})$. We now have enough regularity to make the following calculations rigorous.

From \re{eqEulerClassical} and \re{eqXdefn} we obtain
\eqnbs
\frac{\p^2X}{\p t^2}(y,t)=-\nabla p(X(y,t),t),
\eqnes
which is of course just a Lagrangian interpretation of the Euler equations.
Setting $\tilde p(y,t)=p(X(y,t),t)$ this becomes
\eqnbs
\frac{\p^2X}{\p t^2}=-((\nabla X)^\ast)^{-1}\nabla\tilde p(y,t).
\eqnes
Multiplying through by $(\nabla X)^\ast$ and changing the order of differentiation yields
\eqnb\label{propWeber:eq3}
\frac{\p}{\p t}\left[\frac{\p X_j}{\p t}\frac{\p X_j}{\p y_i}\right]=\frac{\p}{\p y_i}\left[-\tilde p +\frac{1}{2}\left|\frac{\p X}{\p t}\right|^2\right]
\eqne
for $i=1,\ldots,n$, where there is an implicit sum over $j=1,\ldots,n$ and $X_j$, $y_i$ denote the components in $\RR^n$ of $X$, $y$ respectively. Integrating \re{propWeber:eq3} in time, multiplying the corresponding vector equation by $(\nabla A)^\ast$ and evaluating at $A(x,t)$ gives
\eqnb\label{propWeber:eq4}
u(x,t)=\frac{\p X}{\p t}(A(x,t),t)=(\nabla A)^\ast u_0(A(x,t))-\nabla n
\eqne
where
\eqnbs
n(x,t)=\int_0^t \tilde p(A(x,t),s)-\frac{1}{2}\left|\frac{\p X}{\p t}(A(x,t),s)\right|^2 \d s.
\eqnes
As gradients lie in the kernel of  the Leray projector, applying $\PP$ to \re{propWeber:eq4} shows that $u$ satisfies \re{eqELu} as required. Note that $v(x,t)=u_0(A(x,t))$ satisfies \re{eqELv}, hence solutions to the  Euler equations indeed solve the Eulerian-Lagrangian form.
\end{proof}

The converse is a little more technical.
\begin{proposition}
Let $s>\frac{n}{2}+1$ and $u$, $v$, $\eta\in C^0([0,T];H^s)\cap C^1([0,T];H^{s-1})$ satisfy \re{eqELu}, \re{eqELv}, \re{eqELICu} and \re{eqELeta}. Then for some $p\in C^0([0,T];H^s)$ $u$ solves \re{eqEulerClassical}.
\end{proposition}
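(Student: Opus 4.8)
The plan is to reverse the computation used to prove Proposition~\ref{propWeber}, while keeping careful track of which Sobolev spaces the various products and time derivatives land in. Write $w\coloneqq(\nabla A)^\ast v=(\mathrm{I}+\nabla\eta)^\ast v$. Since $s>\tfrac n2+1$ we have $\nabla\eta,\,v\in C^0([0,T];H^{s-1})$ and $\partial_t\nabla\eta,\,\partial_t v\in C^0([0,T];H^{s-2})$, so the standard multiplication estimates on $\TT^n$ (the space $H^{s-1}$ is an algebra and acts by multiplication on $H^{s-2}$) give $w\in C^0([0,T];H^{s-1})\cap C^1([0,T];H^{s-2})$. Because $u=\PP w$ and on the torus the complementary projector $\mathrm{I}-\PP$ acts by $w\mapsto\nabla\Delta^{-1}(\nabla\cdot w)$, we may write
\eqnb\label{propConverse:hodge}
w=u+\nabla q,\qquad q\coloneqq\Delta^{-1}(\nabla\cdot w),
\eqne
with $q$ the mean-zero primitive; elliptic regularity gives $q\in C^0([0,T];H^{s})\cap C^1([0,T];H^{s-1})$. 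Moreover $u$ is divergence free, being in the range of $\PP$.

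The substantive step is the material-derivative identity
\eqnb\label{propConverse:mat}
\partial_t w+(u\cdot\nabla)w+(\nabla u)^\ast w=0,
\eqne
i.e.\ $\partial_t w_i+u_k\partial_k w_i+(\partial_i u_k)w_k=0$, with summation over repeated indices. To derive it, differentiate \re{eqELeta} in $x_i$ and use $\partial_i A_j=\delta_{ij}+\partial_i\eta_j$ to obtain
\eqnbs
(\partial_t+u_k\partial_k)(\partial_i A_j)=-(\partial_i u_k)(\partial_k A_j);
\eqnes
combining this with $(\partial_t+u_k\partial_k)v_j=0$ from \re{eqELv} and the product rule applied to $w_i=(\partial_i A_j)v_j$ yields \re{propConverse:mat}. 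These manipulations involve only first spatial derivatives of $u$, $\eta$ and $v$ (all continuous, since $s-1>\tfrac n2$) together with their first time derivatives, and the interchange $\partial_t\partial_i=\partial_i\partial_t$ is legitimate at this regularity; alternatively one mollifies in space, carries out the computation, and passes to the limit in $C^0([0,T];H^{s-2})$.

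Now substitute $w=u+\nabla q$ into \re{propConverse:mat}. Using $(\partial_i u_k)u_k=\partial_i\big(\tfrac12|u|^2\big)$ and, for a scalar $q$, $u_k\partial_k\partial_i q+(\partial_i u_k)\partial_k q=\partial_i(u_k\partial_k q)$, the terms $(u\cdot\nabla)w+(\nabla u)^\ast w$ collapse to $(u\cdot\nabla)u+\nabla\big(\tfrac12|u|^2+u\cdot\nabla q\big)$, while $\partial_t w=\partial_t u+\nabla\partial_t q$. Hence $u$ satisfies \re{eqEulerClassical} with
\eqnbs
p\coloneqq\partial_t q+\tfrac12|u|^2+u\cdot\nabla q,
\eqnes
and this formula only yields $p\in C^0([0,T];H^{s-1})$ a priori. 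To recover the claimed regularity one rearranges \re{eqEulerClassical}: $\nabla p=-\partial_t u-(u\cdot\nabla)u\in C^0([0,T];H^{s-1})$, since $\partial_t u\in C^0([0,T];H^{s-1})$ by hypothesis and $(u\cdot\nabla)u\in C^0([0,T];H^{s-1})$ by the product estimates; normalising $p$ to have zero spatial mean and using the Poincar\'e inequality on $\TT^n$ then gives $p\in C^0([0,T];H^s)$. I expect the only real work to be the regularity bookkeeping: justifying the product rules and the commutation $\partial_t\partial_i=\partial_i\partial_t$ at the borderline regularity in \re{propConverse:mat}, and, slightly more subtly, noticing that the $H^s$ bound on $p$ must be recovered a posteriori from the equation rather than read off its definition.
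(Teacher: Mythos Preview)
Your proof is correct and follows essentially the same route as the paper: both compute the material derivative of $w=(\nabla A)^\ast v$ via the commutation relation $\D_t\nabla f=\nabla \D_t f-(\nabla u)^\ast\nabla f$ together with the transport equations for $\eta$ and $v$, then read off the Euler equations after inserting the Hodge decomposition $w=u+\nabla q$ (the paper writes $u=w-\nabla n$ and computes $\D_t u$ directly, but the algebra is identical). Your a~posteriori recovery of $p\in C^0([0,T];H^s)$ from $\nabla p=-\p_t u-(u\cdot\nabla)u\in C^0([0,T];H^{s-1})$ is a useful observation that the paper's proof leaves implicit.
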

\begin{proof}
Since $H^{s-1}({\TT^n})\hookrightarrow L^\infty({\TT^n})$ is an algebra, we have that if $f,g\in H^{s-1}$ (scalar valued) then
\[
\p_{x_i}(fg)=(\p_{x_i}f)g + f(\p_{x_i}g)
\]
as an equlity of $L^2$ functions, for $i=1,2,\ldots, n$. Therefore, denoting the material derivative by $\D_t\coloneqq\p_t+(u\cdot\nabla)$, for $f,g\in C^0([0,T];H^{s-1})\cap C^1([0,T];H^{s-2})$ we have 
\eqnb\label{prop2:eqDistr}
\D_t (fg)= (\D_tf)g+f(\D_tg).
\eqne
Moreover, if $f\in H^s$,
\[
(u\cdot\nabla )\nabla f = \nabla((u\cdot\nabla)f)- (\nabla u)^\ast \nabla f.  
\]
Hence the classical commutation relation 
\eqnb\label{eqComtnRel}
\D_t\nabla f=\nabla \D_t f-(\nabla u)^\ast\nabla f
\eqne
holds as an equality in $L^2$, when $f\in C^0([0,T];H^{s})\cap C^1([0,T];H^{s-1})$. 

Since $u$ satisfies \re{eqELu}, we may write
\eqnb\label{eqELuAlt}
u(x,t)= v+(\nabla\eta)^\ast v-\nabla n
\eqne    
for some real-valued $n$. Then by \re{prop2:eqDistr} and \re{eqComtnRel} the following calculations are justified:
\eqnb\label{eqReverseDeriv}
\begin{aligned}
\D_t u&=\D_tv+(\D_t\nabla \eta)^\ast v+(\nabla \eta)^\ast \D_tv-\D_t\nabla n\\
&=(\nabla \D_t \eta)^\ast v - (\nabla u)^\ast(\nabla \eta)^\ast v -\nabla \D_t n +(\nabla u)^\ast \nabla n\\
&=-(\nabla u)^\ast[v+(\nabla \eta)^\ast v-\nabla n]-\nabla \D_t n\\
&=-(\nabla u)^\ast u-\nabla \D_t n\\
&=-\nabla p
\end{aligned}
\eqne
where $p=\frac{1}{2}|u|^2+\D_tn$.
\end{proof}

\section{An Existence and Uniqueness Theorem}
For $r\geq 0$, we will use the notation $H^r$ variously for scalar or vector valued functions in $H^r(\TT^n)$ (componentwise), where this does not cause ambiguity. 
We will often consider functions in spaces of the form $C^0([0,T];(H^{s}(\TT^n))^n)$. To simplify notation we define $\Sigma_s(T)$ (usually denoted $\Sigma_s$) for $T \geq 0$ and $s\geq 0$ by
\eqnbs
\Sigma_s(T):=C^0([0,T];(H^{s}(\TT^n))^n).
\eqnes
 We consider the natural norm on $\Sigma_s$:
\[
\|u\|_{\Sigma_s}=\sup_{t\in[0,T]}\|u(t)\|_{H^{s}}.
\]
The aim of the rest of this paper is to prove the following theorem.
\begin{theorem}\label{thmExistUniq}
If $n\geq 2$, $s>\frac{n}{2}+1$ and $u_0\in H^{s}$ is divergence free
then there exists $T>0$, such that the system (\ref{eqELA}--\ref{eqELv}) with initial conditions \re{eqELICA} and \re{eqELICu} has a unique solution $A,u,v$ such that $\eta,u,v\in\Sigma_{s}(T)\cap C^1([0,T];H^{s-1})$ where $\eta(x,t)=A(x,t)-x$. Moreover $A \in C^1([0,T]\x{\TT^n})$ as a map into the torus.   
\end{theorem}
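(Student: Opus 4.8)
The plan is to set up a Picard-type iteration scheme for the coupled system and close it with energy estimates in $\Sigma_s$, then upgrade to uniqueness and the claimed regularity. Concretely, I would define a map $\Phi$ on a suitable closed ball of $\Sigma_s(T)$ (say in the variable $u$, or in the triple $(\eta,u,v)$) as follows: given $u$, solve the two transport equations \re{eqELeta} and \re{eqELv} for $\eta$ and $v$ by integrating along the flow of $u$ (or, to stay in Sobolev spaces cleanly, by solving the linear transport PDEs and using the standard a priori bound $\|w(t)\|_{H^s}\leq \|w(0)\|_{H^s}\exp(C\int_0^t\|u(\tau)\|_{H^s}\,\d\tau)$ together with the commutator/Moser estimates that hold because $H^{s-1}\hookrightarrow L^\infty$ is an algebra for $s>\tfrac n2+1$), and then set the new velocity to be $\tilde u:=\PP((\nabla A)^\ast v)=\PP(v+(\nabla\eta)^\ast v)$. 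Because $\PP$ is bounded on $H^s$ and $H^s$ is an algebra, $\|\tilde u\|_{H^s}\lesssim \|v\|_{H^s}(1+\|\nabla\eta\|_{H^s})\lesssim \|v\|_{H^s}(1+\|\eta\|_{H^{s+1}})$ — and here one sees that a naive estimate loses a derivative on $\eta$, which is the crux of the difficulty (see below).

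The main obstacle is precisely this apparent derivative loss: \re{eqELu} involves $\nabla\eta$, so controlling $u$ in $H^s$ seems to need $\eta$ in $H^{s+1}$, while the transport equation for $\eta$ only propagates $H^s$ regularity. The resolution, following Constantin's structure, is to exploit the special algebraic form: differentiate \re{eqELeta} in space to get a transport equation for $\nabla\eta$ of the form $\D_t(\nabla\eta)=-(\nabla u)^\ast(\nabla\eta)-\nabla u$ (using the commutator relation \re{eqComtnRel}), so that $\nabla\eta$ itself satisfies a transport equation whose forcing and coefficients involve only $\nabla u$, i.e. only $H^{s}$ data are needed to propagate $\nabla\eta$ in $H^{s-1}$; equivalently one should take $\eta\in H^s$ with $\nabla\eta\in H^{s-1}$ and never ask for more. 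One then checks that $\tilde u=\PP(v+(\nabla\eta)^\ast v)$ does land back in $H^s$ because $(\nabla\eta)^\ast v$ is a product of two $H^{s-1}$ functions, hence in $H^{s-1}$... which is still not $H^s$. The genuine fix is that $v$ is also being transported and one estimates the combination $v+(\nabla\eta)^\ast v$ directly via its own material-derivative equation; alternatively, and more simply, one observes that $\nabla\big((\nabla\eta)^\ast v\big)$ can be rearranged using \re{eqReverseDeriv}-type identities so that the top-order term is $(\nabla u)^\ast(\text{stuff})$, keeping everything in $H^{s-1}$ after one derivative, i.e. in $H^s$ before it. I would carry out these Moser-type estimates carefully to produce a bound of the form $\tfrac{\d}{\d t}(\|\eta\|_{H^s}^2+\|u\|_{H^s}^2+\|v\|_{H^s}^2)\leq C\,\Psi(\|\eta\|_{H^s},\|u\|_{H^s},\|v\|_{H^s})$ with $\Psi$ a polynomial, which by a continuation argument gives a time $T>0$ on which the iterates stay in a fixed ball.

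With the uniform-in-$T$ ball in hand, I would prove contraction of $\Phi$ in the weaker norm $\Sigma_{s-1}(T)$ (taking differences of two solutions, the transport-equation differences pick up the difference of the drifts, and the quadratic nonlinearities are Lipschitz on the ball in the $H^{s-1}$ topology), which is the standard device for nonlinear transport problems where one cannot contract at the top regularity. This yields a unique fixed point $u\in\Sigma_{s-1}$, hence the associated $\eta,v$; the uniform $H^s$ bound then upgrades the fixed point to $\Sigma_s(T)$ by weak-$\ast$ compactness and interpolation, and continuity in time into $H^s$ (as opposed to mere weak continuity) follows from the usual Bona–Smith / energy-equality argument. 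Finally, reading off $\p_t\eta=-(u\cdot\nabla)\eta-u$, $\p_tv=-(u\cdot\nabla)v$ and $\p_tu=\p_t\PP((\nabla A)^\ast v)$ shows each is continuous into $H^{s-1}$, giving $\eta,u,v\in\Sigma_s(T)\cap C^1([0,T];H^{s-1})$. For the last sentence of the theorem, since $u\in C^0([0,T];H^s)\subset C^0([0,T];C^1)$ the trajectory ODE \re{eqXdefn} has $C^1$-in-$(y,t)$ solutions with $\det\nabla X\equiv1$ by the divergence-free condition, so $X(\cdot,t)$ is a $C^1$ diffeomorphism of $\TT^n$, $A=X^{-1}\in C^1([0,T]\x\TT^n;\TT^n)$, and one verifies $A-\id=\eta$ as distributions, tying the torus-valued $A$ to the $\RR^n$-valued $\eta$. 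I expect the derivative-loss issue in the energy estimate for $u$ via \re{eqELu} to be the one genuinely delicate point; everything else is the standard Kato-type machinery adapted to this system.
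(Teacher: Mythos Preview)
Your overall architecture matches the paper's: iterate in $u$ by solving the two transport equations and setting $Su=\PP[(I+\nabla\eta)^\ast v]$, establish $S:B_M(T)\to B_M(T)$, then contract in a weaker norm and upgrade. The paper contracts in $\Sigma_0$ rather than $\Sigma_{s-1}$, and obtains strong $H^s$-continuity of $\eta,v$ directly from a two-sided Gronwall estimate in the transport lemma rather than via Bona--Smith, but these are minor variations.

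The substantive gap is precisely where you flag it: the $H^s$ bound on $Su$. Neither of your two proposed fixes works as written. For fix (a), the pre-projection quantity $w=(\nabla A)^\ast v$ satisfies $\D_t w=-(\nabla u)^\ast w$ (this is the computation behind \re{eqReverseDeriv}), but the term $((\nabla u)^\ast w,w)_{H^s}$ does \emph{not} admit a commutator cancellation: it costs $\|\nabla u\|_{H^s}=\|u\|_{H^{s+1}}$, so the derivative loss is merely relocated. For fix (b), the rearrangement of $\nabla((\nabla\eta)^\ast v)$ does not produce a ``$(\nabla u)^\ast(\text{stuff})$'' structure at all --- $u$ does not appear in this expression --- and \re{eqReverseDeriv} is a material-derivative identity, not a spatial one, so it is not the relevant tool.

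What actually saves the derivative is the Leray projector. Writing the $j$-th component as $(\p_j\eta_k)v_k$, one has
\[
\p_i\big((\p_j\eta_k)v_k\big)=\p_j\big((\p_i\eta_k)v_k\big)-(\p_i\eta_k)(\p_jv_k)+(\p_j\eta_k)(\p_iv_k),
\]
and since $\p_i$ commutes with $\PP$ while $\PP$ annihilates the gradient term $\p_j(\cdot)$, this yields
\[
\p_i\,\PP[(\nabla\eta)^\ast v]=\PP\big[(\nabla\eta)^\ast\p_i v-(\nabla v)^\ast\p_i\eta\big],
\]
which is a product of $H^{s-1}$ functions and hence in $H^{s-1}$; thus $\PP[(\nabla\eta)^\ast v]\in H^s$ with $\|\PP[(\nabla\eta)^\ast v]\|_{H^s}\leq C\|\eta\|_{H^s}\|v\|_{H^s}$. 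The same device (together with $\PP[(\nabla f)^\ast g]=-\PP[(\nabla g)^\ast f]$) gives the Lipschitz bound for the contraction and is used once more at the end to show $\p_t u\in C^0([0,T];H^{s-1})$ without needing $\p_t\nabla\eta$ better than $H^{s-2}$. Once you insert this identity, the rest of your outline goes through.
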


 We will prove this by constructing a contracting iteration scheme using the equations \re{eqELu},\re{eqELv} and \re{eqELeta}. More precisely, given $u\in\Sigma_s(T)$ we find $v, \eta\in\Sigma_s \cap C^1([0,T]\x{\TT^n})$, solutions of
\[
\p_t \eta + (u\cdot\nabla) \eta=-u, \;\eta(0,x)=0
\]  
and
\[
\p_t v+ (u\cdot\nabla) v=0,\;v(0,x)=u_0(x).
\]
We then construct the next iterate of $u$, using
\[
u'=\PP[(\nabla A)^\ast v]
\]
and show that $u\mapsto u'$ is a contraction on a certain subset of $\Sigma_s$.

In the case of H\"older spaces, Constantin constructed an iteration scheme that was instead a contraction with respect to $A$. This involves controlling differences between candidate virtual velocities ($v_1$ and $v_2$, say) in terms of the difference between the respective back-to-labels maps ($A_1$ and $A_2$). This can be achieved, using the fact that $v_i = u_0 (A_i)$ is a solution to \re{eqELv}. In the H\"older setting this is a natural way to proceed, however, relying on this \textit{a posteriori} knowledge about the solution introduces an extra technicality when we work in Sobolev spaces. For this reason we will proceed as described above, relying only on \textit{a priori} estimates. Following the proof, we shall see how the argument differs if the contraction is with respect to $A$, in particular we get an alternative proof under the additional assumption that $s\in\ZZ$.


 We begin the proof of Theorem \ref{thmExistUniq} by stating two inequalities concerning the advection term $(u\cdot\nabla)v$, using the notation $B(u,v)\coloneqq(u\cdot\nabla)v$. Both of these results can be proved following the steps in \cite{ConstFoias,JCR_Sad_Sil_2012} (the only difference being that $B$ here does not include a Leray projection).   

\begin{lemma}\label{lemBilBndHs}
For $s>\frac{n}{2}$ there exists $C_1>0$ such that if $u\in H^s$ and $v\in H^{s+1}$ then $B(u,v)\in H^s$ and
\eqnb\label{lemBilBndHs:bound}
\|B(u,v)\|_{H^s}\leq C_1\|u\|_{H^s}\|v\|_{H^{s+1}}.
\eqne
\end{lemma}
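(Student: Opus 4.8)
The plan is to reduce the estimate $\|B(u,v)\|_{H^s}\le C_1\|u\|_{H^s}\|v\|_{H^{s+1}}$ to the fact that $H^s(\TT^n)$ is a Banach algebra when $s>\tfrac n2$, together with the observation that each component of $B(u,v)=(u\cdot\nabla)v$ is a finite sum $\sum_{k=1}^n u_k\,\partial_k v_j$. So first I would recall (or cite \cite{ConstFoias,JCR_Sad_Sil_2012}) the algebra property: there is $C_s>0$ with $\|fg\|_{H^s}\le C_s\|f\|_{H^s}\|g\|_{H^s}$ for scalar $f,g\in H^s$; this is the only nontrivial analytic input and it is standard, proved either via the Fourier-side estimate $\langle\xi\rangle^s\lesssim\langle\xi-\zeta\rangle^s+\langle\zeta\rangle^s$ combined with $\sup_\xi\sum_\zeta\langle\zeta\rangle^{-2s}<\infty$ (which needs $2s>n$), or via Sobolev embedding $H^s\hookrightarrow L^\infty$ and a Leibniz/Kato--Ponce argument.

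Then the estimate is essentially bookkeeping. For a fixed component, $\partial_k v_j\in H^{s}$ because $v_j\in H^{s+1}$, with $\|\partial_k v_j\|_{H^s}\le\|v_j\|_{H^{s+1}}$; and $u_k\in H^s$ by hypothesis. Hence $u_k\,\partial_k v_j\in H^s$ by the algebra property, with
\[
\|u_k\,\partial_k v_j\|_{H^s}\le C_s\|u_k\|_{H^s}\|\partial_k v_j\|_{H^s}\le C_s\|u_k\|_{H^s}\|v_j\|_{H^{s+1}}.
\]
Summing over $k$ and over the vector components $j$, and bounding $\sum_k\|u_k\|_{H^s}\le n^{1/2}\|u\|_{H^s}$ (componentwise norm) and similarly for $v$, gives the claimed bound with $C_1$ depending only on $n$ and $s$ (e.g.\ $C_1=n\,C_s$ suffices after absorbing the component counting). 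This also shows membership $B(u,v)\in H^s$, since a finite sum of $H^s$ functions is in $H^s$.

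The only point requiring any care — and the place I would expect to spend a sentence or two — is making sure the regularity accounting is tight: we genuinely need $v\in H^{s+1}$ rather than $H^s$, because differentiating costs one derivative and the algebra property is being applied at the level $H^s$, not below it. There is no commutator or cancellation to exploit here (that would be relevant if one wanted the sharper estimate controlling $\|B(u,v)\|_{H^s}$ by $\|u\|_{H^s}\|v\|_{H^{s}}+\|u\|_{?}\|v\|_{H^{s+1}}$, but the statement does not ask for that), so the proof is genuinely just "algebra property plus triangle inequality". Consequently I anticipate no real obstacle; the substantive work was done in establishing the algebra property, which the paper defers to the cited references.
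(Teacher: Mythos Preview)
Your proposal is correct and matches the paper's approach exactly: the paper does not give a detailed proof but simply remarks ``This is really just the fact that $H^s$ is a Banach algebra'' and defers to \cite{ConstFoias,JCR_Sad_Sil_2012}, which is precisely the reduction you carry out. Your write-up in fact supplies more detail than the paper does.
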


This is really just the fact that $H^s$ is a Banach algebra. For the second lemma the assumption that $u$ is divergence-free allows us to ``save a derivative'' by means of the identities 
\[(B(u,(-\Lap)^{r/2}v),(-\Lap)^{r/2}v)_{L^2}=0
\]
 for $r\in[0,s]$.
\begin{lemma}\label{lemTrilBndHs}
 If $s>\frac{n}{2}+1$ there exists $C_2>0$ such that for $u\in H^s$, $v\in H^{s+1}$ with $u$ divergence-free we have
\eqnb\label{lemTrilBndHs:bound}
|(B(u,v),v)_{H^s}|\leq C_2\|u\|_{H^s}\|v\|^2_{H^s}.
\eqne
\end{lemma}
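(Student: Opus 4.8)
The plan is to combine the cancellation identities quoted just before the lemma with a commutator estimate of Kato--Ponce type. Throughout write $\Lambda^s:=(-\Lap)^{s/2}$ and use the equivalent inner product $(f,g)_{H^s}:=(f,g)_{L^2}+(\Lambda^s f,\Lambda^s g)_{L^2}$ (the final constant absorbs the norm equivalence). By bilinearity,
\[
(B(u,v),v)_{H^s}=(B(u,v),v)_{L^2}+(\Lambda^s B(u,v),\Lambda^s v)_{L^2}.
\]
The first term vanishes: for divergence-free $u$ and any sufficiently regular vector field $w$ one has $(B(u,w),w)_{L^2}=\tfrac12\int_{\TT^n}u\cdot\nabla|w|^2=-\tfrac12\int_{\TT^n}(\nabla\cdot u)|w|^2=0$, which applied with $w=v$ is the $r=0$ case of the stated identity. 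For the second term, write $\Lambda^s B(u,v)=B(u,\Lambda^s v)+[\Lambda^s,u\cdot\nabla]v$; the contribution of $B(u,\Lambda^s v)$ to $(\,\cdot\,,\Lambda^s v)_{L^2}$ is killed by the same cancellation with $w=\Lambda^s v$ (the $r=s$ case). Hence
\[
(B(u,v),v)_{H^s}=\big([\Lambda^s,u\cdot\nabla]v,\,\Lambda^s v\big)_{L^2},
\]
and by Cauchy--Schwarz it remains to bound $\|[\Lambda^s,u\cdot\nabla]v\|_{L^2}$ by $C\|u\|_{H^s}\|v\|_{H^s}$.

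Since each $\Lambda^s$ commutes with $\p_j$, we have $[\Lambda^s,u\cdot\nabla]v=\sum_{j=1}^n[\Lambda^s,u_j]\p_j v$, so the task reduces to the Kato--Ponce commutator inequality
\[
\|[\Lambda^s,f]g\|_{L^2}\le C\big(\|\nabla f\|_{L^\infty}\|g\|_{H^{s-1}}+\|f\|_{H^s}\|g\|_{L^\infty}\big),
\]
valid on $\TT^n$ for $s>0$ (provable by a Littlewood--Paley/paraproduct decomposition, or citable directly), applied with $f=u_j$ and $g=\p_j v$. This yields $\|[\Lambda^s,u\cdot\nabla]v\|_{L^2}\le C(\|\nabla u\|_{L^\infty}\|v\|_{H^s}+\|u\|_{H^s}\|\nabla v\|_{L^\infty})$. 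Finally the hypothesis $s>\tfrac n2+1$ gives $s-1>\tfrac n2$, so the Sobolev embedding $H^{s-1}\hookrightarrow L^\infty$ furnishes $\|\nabla u\|_{L^\infty}\le C\|u\|_{H^s}$ and $\|\nabla v\|_{L^\infty}\le C\|v\|_{H^s}$; substituting these gives \re{lemTrilBndHs:bound}.

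A few points need care. All the integrations by parts and the splitting $\Lambda^s B(u,v)=B(u,\Lambda^s v)+[\Lambda^s,u\cdot\nabla]v$ are a priori only formal; one makes them rigorous by first establishing the identity and estimate for smooth $u,v$ and then passing to the limit, using Lemma \ref{lemBilBndHs} (which is where $v\in H^{s+1}$ is needed) to guarantee $B(u,v)\in H^s$ so that $(B(u,v),v)_{H^s}$ is genuinely defined, together with density of smooth functions. The one substantial ingredient is the fractional commutator estimate: for integer $s$ it follows from the Leibniz rule plus Gagliardo--Nirenberg interpolation to control the intermediate terms $\|(D^\beta u\cdot\nabla)D^{\alpha-\beta}v\|_{L^2}$, but for general real $s$ it is the genuine Kato--Ponce inequality, which is the part I would either cite or prove via paraproducts. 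Everything else is bookkeeping.
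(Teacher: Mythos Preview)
Your proof is correct and follows essentially the same approach the paper indicates: the cancellation identities $(B(u,\Lambda^r v),\Lambda^r v)_{L^2}=0$ quoted just before the lemma, combined with a Kato--Ponce-type commutator estimate, are precisely what the cited references \cite{ConstFoias,JCR_Sad_Sil_2012} supply. The paper does not spell out any further details beyond this hint and the citations, so your write-up is in fact more explicit than what appears there.
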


We use the following shorthand for closed balls in $\Sigma_{s}$:
\eqnbs
B_M = \overline{B_{\|\cdot\|_{\Sigma_s}}(0,M)},
\eqnes
i.e. $B_M$ is the closed unit ball centred at the origin of radius $M>0$ with respect to the norm $\|\cdot\|_{\Sigma_s}$. Where ambiguity could arise we write $B_M(T)$ for the closed ball in $\Sigma_s(T)$.

\begin{lemma}\label{lemW}
If $s>\frac{n}{2}+1$ and $\eta,v\in \Sigma_{s}(T)$ then $\PP[(\nabla \eta)^\ast v]\in \Sigma_{s}$ and there exists  a constant $C_3>0$ (independent of $\eta$, $v$, $t$ and $T$) such that for fixed $t$,
\eqnb\label{lemW:bound1}
\|\PP[(\nabla \eta)^\ast v]\|_{H^r}\leq C_3\|\eta\|_{H^{s}}\|v\|_{H^r},
\eqne
where $r=s$ or $r=s-1$. Furthermore, there exists $C_3'>0$ such that for any $M>0$ and $T>0$, the following bounds hold uniformly with respect to $t\in[0,T]$ for any $ \eta_1,\eta_2,v_1,v_2 \in B_M(T)$:
\eqnb\label{lemW:LipBound}
\|\PP[(\nabla \eta_1)^\ast v_1-(\nabla \eta_2)^\ast v_2]\|_{X}\leq C_3'M(\| \eta_1- \eta_2\|_{X}+ \|v_1-v_2\|_{X}).
\eqne
where $X$ is $L^2(\TT^n)$ or $H^{s-1}$. 
\end{lemma}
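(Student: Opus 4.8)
The plan is to lean on four elementary facts: $\PP$ is a matrix of Fourier multipliers with bounded symbol, hence bounded on every $H^r$ ($r\ge0$) and on $L^2$; $\PP$ commutes with each $\p_i$ and with $J^s:=(1-\Lap)^{s/2}$; $\PP\nabla\phi=0$ for every scalar $\phi$; and, since $s-1>\tfrac n2$, the space $H^{s-1}$ is a Banach algebra and embeds in $L^\infty$. Everything in the lemma except one estimate will then follow by soft arguments.

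\textbf{The bound \re{lemW:bound1}.} For $r=s-1$ this is immediate: $\nabla\eta\in H^{s-1}$ and $v\in H^{s-1}$, so by the algebra property $\|(\nabla\eta)^\ast v\|_{H^{s-1}}\lesssim\|\eta\|_{H^{s}}\|v\|_{H^{s-1}}$, and $\PP$ is bounded on $H^{s-1}$. For $r=s$ the same route fails — $\nabla\eta$ is only in $H^{s-1}$, so $(\nabla\eta)^\ast v$ need not lie in $H^s$ — and here one must use that $\PP$ discards a gradient. I would estimate $\|\PP[(\nabla\eta)^\ast v]\|_{H^s}=\|\PP J^s[(\nabla\eta)^\ast v]\|_{L^2}$ (valid since $\PP$ and $J^s$ commute and $\PP[(\nabla\eta)^\ast v]$ already lies in $H^{s-1}$) and expand, for each pair $(\ell,j)$, $J^s(\p_\ell\eta_j\, v_j)=\p_\ell(v_jJ^s\eta_j)-(\p_\ell v_j)(J^s\eta_j)+[J^s,v_j]\p_\ell\eta_j$. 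The commutator is controlled by the Kato--Ponce/Moser inequality, $\|[J^s,v_j]\p_\ell\eta_j\|_{L^2}\lesssim\|\nabla v_j\|_{L^\infty}\|\p_\ell\eta_j\|_{H^{s-1}}+\|v_j\|_{H^s}\|\p_\ell\eta_j\|_{L^\infty}\lesssim\|v\|_{H^s}\|\eta\|_{H^s}$, the two $L^\infty$ factors being handled by $H^{s-1}\hookrightarrow L^\infty$; the term $(\p_\ell v_j)(J^s\eta_j)$ is likewise $O(\|v\|_{H^s}\|\eta\|_{H^s})$ in $L^2$; and summing $\p_\ell(v_jJ^s\eta_j)$ over $j$ produces $\p_\ell$ of a scalar, so after the contraction with $\PP$ those terms contribute $\PP\nabla(\text{scalar})=0$. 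Hence $\PP J^s[(\nabla\eta)^\ast v]=\PP r$ with $r\in L^2$, $\|r\|_{L^2}\lesssim\|\eta\|_{H^s}\|v\|_{H^s}$, which is \re{lemW:bound1}. Membership $\PP[(\nabla\eta)^\ast v]\in\Sigma_s$ then follows from bilinearity plus the $r=s$ bound applied to the differences $\eta(t)-\eta(t')$ and $v(t)-v(t')$.

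\textbf{The Lipschitz bound \re{lemW:LipBound}.} Write $(\nabla\eta_1)^\ast v_1-(\nabla\eta_2)^\ast v_2=(\nabla(\eta_1-\eta_2))^\ast v_1+(\nabla\eta_2)^\ast(v_1-v_2)$ and apply $\PP$. The second summand is handled by \re{lemW:bound1} with $r=s-1$ (for $X=H^{s-1}$) and by the trivial estimate $\|\PP[(\nabla\eta_2)^\ast w]\|_{L^2}\lesssim\|\nabla\eta_2\|_{L^\infty}\|w\|_{L^2}\lesssim\|\eta_2\|_{H^s}\|w\|_{L^2}$ (for $X=L^2$), giving $\lesssim M\|v_1-v_2\|_X$. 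For the first summand the naive product bound would cost a derivative on $\eta_1-\eta_2$, so instead I use $\PP[(\nabla\phi)^\ast\psi]=-\PP[(\nabla\psi)^\ast\phi]$ — which is just the product rule $(\nabla\phi)^\ast\psi+(\nabla\psi)^\ast\phi=\nabla(\phi\cdot\psi)$ followed by $\PP\nabla(\cdot)=0$ — with $\phi=\eta_1-\eta_2$ and $\psi=v_1$: then $\|\PP[(\nabla\psi)^\ast\phi]\|_{H^{s-1}}\le\|(\nabla\psi)^\ast\phi\|_{H^{s-1}}\lesssim\|\nabla v_1\|_{H^{s-1}}\|\eta_1-\eta_2\|_{H^{s-1}}\lesssim\|v_1\|_{H^s}\|\eta_1-\eta_2\|_{H^{s-1}}$, and for $X=L^2$ similarly $\|\PP[(\nabla\psi)^\ast\phi]\|_{L^2}\lesssim\|\nabla v_1\|_{L^\infty}\|\eta_1-\eta_2\|_{L^2}\lesssim\|v_1\|_{H^s}\|\eta_1-\eta_2\|_{L^2}$. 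Adding the two summands and using $\eta_i,v_i\in B_M$ gives \re{lemW:LipBound}; all the estimates are pointwise in $t$ with constants depending only on $n$ and $s$, so they hold uniformly on $[0,T]$ and independently of $T$, $\eta_i$ and $v_i$.

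\textbf{Main obstacle.} The only genuine work is the $r=s$ case of \re{lemW:bound1}: because $\nabla\eta$ falls one derivative short of $H^s$ it cannot be treated by the algebra property alone, and one has to single out the ``bad'' part of $(\nabla\eta)^\ast v$ as a gradient and discard it via $\PP$ — the same derivative-saving phenomenon that Lemma \ref{lemTrilBndHs} extracts from incompressibility, but realised here through $\PP\nabla(\cdot)=0$. (A Bony paraproduct decomposition of $(\nabla\eta)^\ast v$, isolating $\sum_jT_{v_j}\p_i\eta_j=\p_i\sum_jT_{v_j}\eta_j-\sum_jT_{\p_iv_j}\eta_j$ whose leading term is a pure gradient, is an equivalent route to the same bound.)
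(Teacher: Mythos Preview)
Your argument is correct. The $r=s-1$ bound and the Lipschitz bound \re{lemW:LipBound} are handled essentially as in the paper; in particular, the swap identity $\PP[(\nabla\phi)^\ast\psi]=-\PP[(\nabla\psi)^\ast\phi]$ is exactly the paper's equation \re{lemW:eq2}, and the only cosmetic difference is that you use the asymmetric splitting $(\nabla\eta_1)^\ast v_1-(\nabla\eta_2)^\ast v_2=(\nabla(\eta_1-\eta_2))^\ast v_1+(\nabla\eta_2)^\ast(v_1-v_2)$ whereas the paper uses the symmetric one with $v_1+v_2$ and $\eta_1+\eta_2$.

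Where you genuinely diverge is the $r=s$ case of \re{lemW:bound1}. You invoke the Kato--Ponce commutator estimate on $[J^s,v_j]\p_\ell\eta_j$ after peeling off the gradient $\p_\ell(\sum_j v_jJ^s\eta_j)$. The paper instead derives a clean pointwise identity by commuting the two partial derivatives and reprojecting:
\[
\p_{x_i}\PP[(\nabla\eta)^\ast v]=\PP\bigl[(\nabla\eta)^\ast\p_{x_i}v-(\nabla v)^\ast\p_{x_i}\eta\bigr],
\]
so that each spatial derivative of $\PP[(\nabla\eta)^\ast v]$ is expressed as $\PP$ applied to a product of first-order factors, and the $H^{s-1}$ algebra property alone finishes the job. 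The paper's route is more elementary (no commutator machinery), and the identity it produces is reused verbatim later---in the proof of Theorem \ref{thmExistUniq}, showing $u\in C^1([0,T];H^{s-1})$ relies on exactly this device. Your Kato--Ponce route, or the paraproduct variant you sketch, is a perfectly standard alternative that gets the same bound with less manipulation but at the cost of importing a sharper tool.
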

\begin{proof}
For continuity into $H^{s-1}$ we use the fact that $H^{s-1}$ is a Banach algebra. More precisely, we see that 
\eqnb\label{lemW:eq1}
\begin{aligned}
\|\PP[(\nabla \eta_1)^\ast v_1-(\nabla \eta_2)^\ast v_2]\|_{H^{s-1}}&\leq C\| \eta_1- \eta_2\|_{H^{s}}\|v_1+v_2\|_{H^{s-1}}\\
&\oneChar+C\|\nabla \eta_1+\nabla \eta_2\|_{H^{s-1}}\|v_1-v_2\|_{H^{s-1}},
\end{aligned}
\eqne
where $C>0$ is  independent of the $\eta_i$ and $v_i$. 
The key step in the proof of \re{lemW:bound1} when $r=s$ is that if $\eta,v\in C^2$ then for some $q\in H^{s}$, 
\eqnbs
\begin{aligned}
\p_{x_i}\PP[(\nabla \eta)^\ast v]&=\p_{x_i}(\p_{x_j}\eta_kv_k)-\p_{x_i}\p_{x_j}q\\
&=\p_{x_j}(\p_{x_i}\eta_kv_k)-\p_{x_i}\eta_k\p_{x_j}v_k+\p_{x_j}\eta_k\p_{x_i}v_k-\p_{x_i}\p_{x_j}q
\end{aligned}
\eqnes
where sums are taken implicitly over $k$. The left-hand side is already divergence-free so projecting again removes the gradient terms and yields
\eqnb\label{lemW:eq0}
\p_{x_i} \PP[(\nabla \eta)^\ast v]=\PP[(\nabla \eta)^\ast \p_{x_i} v-(\nabla v)^\ast \p_{x_i} \eta].
\eqne
By continuity, this still holds if we only have $\eta,v\in H^{s}$.
A calculation similar to \re{lemW:eq1} applied to \re{lemW:eq0} yields continuity with respect to the $H^{s}$ norm as claimed. 

The inequalities \re{lemW:bound1} for $r=s-1$ and $r=s$ are obtained by taking the $H^{s-1}$ norms of $\PP[(\nabla \eta)^\ast v]$ and \re{lemW:eq0} respectively.

To prove \re{lemW:LipBound}, we again use the fact that $\PP$ removes gradients. Indeed for $f$, $g\in H^{s}$, we have
\eqnb\label{lemW:eq2}
\PP((\nabla f)^\ast g)=\PP(\nabla(f\cdot g)-(\nabla g)^\ast f)=-\PP((\nabla g)^\ast f).
\eqne
Setting $f= \eta_1- \eta_2$, $g=v_1+ v_2$, we see that the calculations in \re{lemW:eq1} can be modified to give the required result. Note that for the $L^2$ bound we use the fact that \re{lemW:eq1} holds if we replace $H^{s}$ with $L^\infty$ and $H^{s-1}$ with $L^2$.
\end{proof}

The next lemma gives uniform bounds on the $H^s$ norms of solutions to the transport equations \re{eqELA} and \re{eqELv}. We will consider the following system:
\eqnb\label{eqTransport}
\left\{
\begin {array}{l}
\p_t f + (u\cdot\nabla) f =g\\
f(0)=f_0
\end{array}
\right.
\eqne
where $f,g:[0,T]\x{\TT^n}\to\RR^n$ and $u$ is divergence free.

\begin{lemma}\label{lemTransport}
Let $s>\frac{n}{2}+1$ and fix $f_0\in H^{s}$, $g\in\Sigma_s$. If $u\in\Sigma_s$ is non-zero and divergence free then there exists a unique solution $f$ to \re{eqTransport}. Furthermore, the solution $f\in \Sigma_{s}\cap C^1([0,T];H^{s-1})\cap C^1([0,T]\x {\TT^n})$ and  there exists $C_4>0$ (from Lemma \ref{lemTrilBndHs}) such that if $r,t\in[0,T]$ we have:
\eqnb\label{lemTransport:bound1}
\|f(t)\|_{H^{s}}\leq \left(\|f(r)\|_{H^{s}}+\frac{\|g\|_{\Sigma_s}}{C_4\|u\|_{\Sigma_s}}\right)\exp(C_4|t-r|\|u\|_{\Sigma_{s}})-\frac{\|g\|_{\Sigma_s}}{C_4\|u\|_{\Sigma_s}}.
\eqne
\end{lemma}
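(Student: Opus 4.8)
The plan is to obtain the a priori bound \re{lemTransport:bound1} first, then use it to justify existence, uniqueness and the claimed regularity. For the a priori estimate, I would differentiate $\tfrac{1}{2}\|f(t)\|_{H^s}^2$ in time (formally for now), using the equation \re{eqTransport} to write $\tfrac{\d}{\d t}\tfrac12\|f\|_{H^s}^2 = (\p_t f, f)_{H^s} = -(B(u,f),f)_{H^s} + (g,f)_{H^s}$. By Lemma \ref{lemTrilBndHs} the first term is bounded by $C_2\|u\|_{H^s}\|f\|_{H^s}^2 \le C_4\|u\|_{\Sigma_s}\|f\|_{H^s}^2$, and by Cauchy--Schwarz the second is bounded by $\|g\|_{\Sigma_s}\|f\|_{H^s}$. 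This gives the scalar differential inequality $\tfrac{\d}{\d t}\|f\|_{H^s} \le C_4\|u\|_{\Sigma_s}\|f\|_{H^s} + \|g\|_{\Sigma_s}$ (dividing by $\|f\|_{H^s}$, with the usual care when it vanishes), and Gr\"onwall's inequality applied forwards and backwards in time from $r$ yields exactly \re{lemTransport:bound1}, with the shift $\|g\|_{\Sigma_s}/(C_4\|u\|_{\Sigma_s})$ arising from solving the affine linear ODE comparison. Here is where the hypothesis that $u$ is non-zero is used, so that the denominator makes sense; if $u\equiv 0$ one would simply use $\|f(t)\|_{H^s}\le \|f(r)\|_{H^s}+|t-r|\,\|g\|_{\Sigma_s}$.

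To make this rigorous and to obtain existence, I would first construct the solution by a standard method: either via the Galerkin truncation in Fourier modes, or more directly by solving along characteristics. Since $u\in\Sigma_s$ with $s>\tfrac n2+1$, we have $u\in C^0([0,T];C^1(\TT^n))$ by Sobolev embedding, so the characteristic ODE $\tfrac{\d}{\d t}X(y,t)=u(X(y,t),t)$, $X(y,0)=y$ has a unique $C^1$-in-time solution for each $y$, with $X(\cdot,t)$ a $C^1$ diffeomorphism of $\TT^n$ (volume-preserving since $u$ is divergence free). Then $f(X(y,t),t) = f_0(y) + \int_0^t g(X(y,s),s)\,\d s$ defines the candidate solution; composing with $A(\cdot,t)=X(\cdot,t)^{-1}$ gives $f$ as a function on $\TT^n\times[0,T]$, and one checks it lies in $C^1([0,T]\x\TT^n)$ and solves \re{eqTransport}. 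To see $f\in\Sigma_s$ one runs the energy estimate on the Galerkin approximations (where all manipulations are legitimate) and passes to the limit, or equivalently mollifies $u$ and $f_0$, applies the bound uniformly, and passes to the limit using weak-$\ast$ compactness in $L^\infty([0,T];H^s)$ together with the standard argument (Lions--Magenes type) upgrading weak to strong continuity in time for transport equations with divergence-free $H^s$ drift. Continuity into $H^{s-1}$ in the $C^1$ sense then follows because $\p_t f = g - (u\cdot\nabla)f \in \Sigma_{s-1}$ by Lemma \ref{lemBilBndHs}, and uniqueness follows from the $L^2$ energy estimate for the difference of two solutions (which satisfies a homogeneous transport equation with zero data), using $(B(u,w),w)_{L^2}=0$.

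The main obstacle I anticipate is not the formal energy estimate, which is routine, but the justification of strong continuity $f\in C^0([0,T];H^s)$ rather than merely $f\in L^\infty([0,T];H^s)\cap C^0_w([0,T];H^s)$. The weak continuity and the bound on $\|f(t)\|_{H^s}$ are easy; promoting these to $\|f(t)\|_{H^s}\to\|f(r)\|_{H^s}$ as $t\to r$ requires an argument showing the $H^s$ norm does not jump, typically by proving the energy balance is actually an equality (or a continuous-in-time inequality in both directions) on the exact solution, which in turn needs either the mollification/commutator estimate argument or the Galerkin limit done carefully. Given that the paper already invokes \cite{ConstFoias, JCR_Sad_Sil_2012} for exactly this type of transport estimate, I would lean on those references for the continuity upgrade and present the energy inequality \re{lemTransport:bound1} and the characteristic construction in detail.
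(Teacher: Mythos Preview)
Your proposal is correct and follows essentially the same route as the paper: characteristics to build a classical solution, the $H^s$ energy inequality via Lemma~\ref{lemTrilBndHs} plus Gr\"onwall (forwards and backwards from $r$), Galerkin to make the estimate rigorous, uniqueness by the $L^2$ energy identity, and $C^1$ into $H^{s-1}$ by reading off $\p_tf$ from the equation.

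The one point worth sharpening is the strong continuity into $H^s$. You correctly flag it as the main obstacle and even list the right idea --- a ``continuous-in-time inequality in both directions'' --- but then propose to defer to the references. The paper instead resolves this self-containedly using exactly that idea: once \re{lemTransport:bound1} is established for all pairs $r,t\in[0,T]$ (by time-reversal for $t<r$), swapping $r$ and $t$ gives
\[
(\|f(r)\|_{H^s}+\alpha)\e^{-\beta|t-r|}-\alpha\;\leq\;\|f(t)\|_{H^s}\;\leq\;(\|f(r)\|_{H^s}+\alpha)\e^{\beta|t-r|}-\alpha,
\]
so $t\mapsto\|f(t)\|_{H^s}$ is continuous. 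Combined with weak continuity into $H^s$ (which follows from the uniform $H^s$ bound together with strong continuity into $H^{s-1}$, via Banach--Alaoglu and uniqueness of weak limits), this yields strong continuity. No commutator estimates or external references are needed.
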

\begin{proof}
By the method of characteristics we obtain a solution $f\in C^1([0,T]\x{\TT^n})$. The formal argument that follows motivates our consideration of the regularity of $f$.
Taking the $H^{s}$ product of  \re{eqTransport} with $f$ yields 
\[
\frac{1}{2}\frac{\d}{\d t}\|f\|^2_{H^{s}}=-(B(u,f),f)_{H^{s}}+(f,g)_{H^s}.
\]
By Lemma \ref{lemTrilBndHs}, there exists $C>0$ such that for all $t\in[0,T]$,
\eqnb\label{lemTheta:bound2}
\frac{1}{2}\frac{\d}{\d t}\|f(t)\|_{H^{s}}^2\leq C\|u(t)\|_{H^{s}}\|f(t)\|_{H^{s}}^2+\|g(t)\|_{H^s}\|f(t)\|_{H^s}.
\eqne
Now \re{lemTransport:bound1} follows from Gronwall's inequality. In the case $r>t$, this argument is applied to the time-reversed equation, that is, using the fact that for fixed $r$, $-f(r-t)$ is transported by $-u(r-t)$ with forcing $g(r-t)$.

To properly justify this we can proceed by a Galerkin method. For each $N\in\NN$ we find a solution to the system
\eqnb\label{lemTransport:eqTrunc}
\left\{
\begin{array}{l}
\p_t f_N +P_NB(u_N,f_N)=g_N\\
f_N(r)=P_N f(r),
\end{array}
\right.
\eqne 
on $[r,T]$, where $P_N$ denotes truncation up to Fourier modes of order $N$ (in space), $u_N\coloneqq P_Nu$ and $g_N\coloneqq P_N g$.
The estimate \re{lemTransport:bound1} applies to $f_N$ so by a standard argument using the Aubin-Lions lemma we obtain a weak solution $h\in L^\infty(r,T; H^s)$ such that $\p_th\in L^\infty(r,T; H^{s-1})$, hence $h\in C^0([0,T];H^{s-1})$. Using the divergence free property we obtain uniqueness of solutions $h\in L^2(r,T;H^1)$ with time derivative $\p_t h\in L^2(r,T;L^2)$. Indeed, if $h$ and $\tilde h$ are two such solutions it follows from \re{eqTransport} that
\[
\frac{\d}{\d s}\|h-\tilde h\|^2_{L^2}=0.
\]
Therefore $f=h$, i.e. this weak solution agrees with our $C^1$ classical solution on $[r,T]$. 

We now prove \re{lemTransport:bound1} in the case $r\leq t$. Since $f_N\to f$ in $L^2(r,T; H^{s-1})$, we may choose a dense countable subset $\{t_k\}_{k=1}^\infty\subset[r,T]$ such that $f_N(t_k)\to f(t_k)$ in $H^{s-1}$ as $N\to\infty$ for each $k$. The formal argument above is valid on the truncated system, thus 
\eqnb\label{lemTransport:eqApproxIneq}
\|f_N(t_k)\|_{H^s}\leq \left(\|P_Nf(r)\|_{H^s}+\frac{\|g\|_{\Sigma_s}}{C\|u_N\|_{\Sigma_s}}\right)\exp(C|t_k-r|\|u\|_{\Sigma_s})-\frac{\|g_N\|_{\Sigma_s}}{C\|u\|_{\Sigma_s}}.
\eqne
Hence, passing to a subsequence of $f_N$ for each $k$ with a diagonalisation argument, we may assume that for all $k$, $f_N(t_k)$ converges weakly in $H^s$ as $N\to\infty$. Moreover, by the choice of the points $t_k$ and uniqueness of weak limits, we must have $f_N(t_k)\rightharpoonup f(t_k)$ in $H^s$. Taking the $\liminf$ of \re{lemTransport:eqApproxIneq} with respect to  $N\to\infty$ yields
\eqnb\label{lemTransport:eqprebound1}
\|f(t_k)\|_{H^s}\leq \left(\|f(r)\|_{H^s}+\frac{\|g\|_{\Sigma_s}}{C\|u\|_{\Sigma_s}}\right)\exp(C|t_k-r|\|u\|_{\Sigma_s})-\frac{\|g\|_{\Sigma_s}}{C\|u\|_{\Sigma_s}}.
\eqne
To prove \re{lemTransport:bound1} and the weak continuity of $f$ into $H^s$ we will use the fact that a weakly convergent sequence  in $H^{s-1}$ that is also bounded in $H^s$ must converge weakly in $H^s$ to the same limit by the Banach--Alaoglu theorem. Indeed if $x_k\rightharpoonup x$ in $H^{s-1}$ is bounded in $H^s$ then any subsequence admits a further subsequence converging weakly in $H^s$ to $x$ by the uniqueness of weak limits.   

From this, \re{lemTransport:bound1} follows by the density of $\{t_k\}$ and the continuity of $f$ into $H^{s-1}$. Indeed, in the case $t\geq r$, for any subsequence $(t_{k_\ell})_{\ell=1}^\infty\subset(t_k)_{k=1}^\infty$ such that $t_{k_\ell}\to t$ we have $f(t_{k_\ell})\rightharpoonup f(t)$ in $H^s$. Applying \re{lemTransport:eqprebound1} at $t_{k_\ell}$ and taking the $\liminf$ as $\ell\to\infty$ yeilds \re{lemTransport:bound1} at time $t$. For $t<r$ the required bounds are obtained in the same way from the time-reversed version of \re{lemTransport:eqTrunc}. 

We have shown that $\|f(t)\|_{H^s}$ in bounded uniformly, not merely almost everywhere. Therefore for any fixed $\tau\in[0,T]$ and any sequence $\{\tau_k\}\subset[0,T]$ such that $\tau_k\to\tau$ we deduce, by the continuity into $H^{s-1}$,  that $f(\tau_k)\rightharpoonup f(\tau)$ in $H^s$. This says that $f$ is weakly continuous into $H^s$.  

To see that $f\in \Sigma_{s}$ it is therefore enough to show that $\|f(t)\|_{H^{s}}$ is continuous. This is the case since for all $r,t\in[0,T]$, \re{lemTransport:bound1} gives bounds of the form
\[
(\|f(r)\|_{H^s}+\alpha)\e^{-\beta|t-r|} -\alpha\leq \|f(t)\|_{H^s}\leq(\|f(r)\|_{H^s}+\alpha)\e^{\beta|t-r|} -\alpha
\]
for time independent constants $\alpha,\beta>0$, where the first inequality comes from \re{lemTransport:bound1} with $r$ and $t$ interchanged.

The fact that $f\in C^1([0,T];H^{s-1})$ follows from the fact that $\p_tf\in \Sigma_{s-1}$ which can be seen from the regularity of the other terms in \re{eqTransport}. 
\end{proof}

\begin{lemma}\label{lemDifferencesTransport}
For $s>n/2+1$ fix $u_1$, $u_2\in\Sigma_s$ and $f_0\in H^s$. Let $g_1=g_2=0$ or $g_i=-u_i$ for $i=1,2$. If $f_1$, $f_2$ are the solutions of \re{eqTransport} corresponding to $u_1$, $u_2$, $g_1$, $g_2$ respectively, then in the case that $g_1=g_2=0$, there exists $C_5>0$ depending only on $s$ such that 
\eqnb\label{lemDifferencesTransport:bound1}
\|f_1(t)-f_2(t)\|_{L^2}\leq C_5\|f_1+f_2\|_{\Sigma_s}\|u_1-u_2\|_{\Sigma_0} t
\eqne 
for all $t\in[0,T]$. In the case that $g_i=-u_i$ for $i=1,2$ we instead have
\eqnb\label{lemDifferencesTransport:bound2}
\|f_1(t)-f_2(t)\|_{L^2}\leq (C_5\|f_1+f_2\|_{\Sigma_s}+1)\|u_1-u_2\|_{\Sigma_0} t
\eqne 
\end{lemma}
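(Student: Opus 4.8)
The plan is to prove both estimates by a first-order ($L^2$) energy argument applied to the difference $w:=f_1-f_2$. Note that $w(0)=0$ since $f_1(0)=f_2(0)=f_0$, and that by Lemma~\ref{lemTransport} each $f_i$ lies in $\Sigma_s\cap C^1([0,T];H^{s-1})$, so $w\in C^1([0,T];L^2)$ and $t\mapsto\|w(t)\|_{L^2}^2$ is continuously differentiable; moreover $u_1,u_2$ are divergence free, so the cancellation $((u_i\cdot\nabla)w,w)_{L^2}=0$ is available. Subtracting the two transport equations and rewriting the advection term in the two natural ways gives
\[
\p_t w+(u_1\cdot\nabla)w=-((u_1-u_2)\cdot\nabla)f_2+(g_1-g_2),\qquad
\p_t w+(u_2\cdot\nabla)w=-((u_1-u_2)\cdot\nabla)f_1+(g_1-g_2);
\]
taking the $L^2$ inner product of each with $w$, using the cancellation, and adding the two resulting identities yields
\[
\frac{\d}{\d t}\|w\|_{L^2}^2=-\bigl(((u_1-u_2)\cdot\nabla)(f_1+f_2),\,w\bigr)_{L^2}+2\,(g_1-g_2,\,w)_{L^2}.
\]
Averaging the two forms is precisely what produces $f_1+f_2$ (rather than $f_1$ or $f_2$ alone), matching the statement.

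It remains to estimate the right-hand side and integrate. By Cauchy--Schwarz and H\"older's inequality,
\[
\|((u_1-u_2)\cdot\nabla)(f_1+f_2)\|_{L^2}\le\|u_1-u_2\|_{L^2}\,\|\nabla(f_1+f_2)\|_{L^\infty}\le C\,\|u_1-u_2\|_{L^2}\,\|f_1+f_2\|_{H^s},
\]
using the embedding $H^{s-1}\hookrightarrow L^\infty$ (valid as $s-1>n/2$) and $\|\nabla h\|_{H^{s-1}}\le\|h\|_{H^s}$, with $C$ depending only on $s$ and $n$. Setting $\phi(t):=\|w(t)\|_{L^2}$ and $h(t):=\tfrac12 C\|u_1-u_2\|_{L^2}\|f_1+f_2\|_{H^s}+\|g_1-g_2\|_{L^2}$, the identity above gives $\frac{\d}{\d t}\phi^2\le 2\phi h$; since $\phi^2\in C^1$ and $\phi(0)=0$, the elementary inequality $\phi(t)\le\int_0^t h$ holds (apply the differential inequality to $\sqrt{\phi^2+\epsilon^2}$ and let $\epsilon\to0$). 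Thus
\[
\|w(t)\|_{L^2}\le\int_0^t\Bigl(\tfrac12 C\,\|u_1-u_2\|_{L^2}\|f_1+f_2\|_{H^s}+\|g_1-g_2\|_{L^2}\Bigr)\d s.
\]
When $g_1=g_2=0$ the last term drops and \re{lemDifferencesTransport:bound1} follows with $C_5=C/2$; when $g_i=-u_i$ we have $\|g_1-g_2\|_{L^2}=\|u_1-u_2\|_{L^2}\le\|u_1-u_2\|_{\Sigma_0}$, which gives \re{lemDifferencesTransport:bound2} with the same $C_5$. (If some $u_i$ vanishes, Lemma~\ref{lemTransport} does not apply directly, but then $f_i$ is explicit and the same estimates hold trivially.)

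There is no real obstacle here --- this is a routine $L^2$ energy estimate. The points that need care are the justification that $t\mapsto\|w(t)\|_{L^2}^2$ is differentiable and that $((u_i\cdot\nabla)w,w)_{L^2}=0$, both of which follow from the regularity provided by Lemma~\ref{lemTransport} together with $\nabla\cdot u_i=0$, and the symmetrisation step, which is what allows the bound to be phrased in terms of $\|f_1+f_2\|_{\Sigma_s}$ exactly as in the statement.
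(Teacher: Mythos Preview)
Your proof is correct and follows essentially the same route as the paper: both arrive at the identity $\frac{\d}{\d t}\|f_1-f_2\|_{L^2}^2=-\bigl(B(u_1-u_2,f_1+f_2),f_1-f_2\bigr)_{L^2}+2(g_1-g_2,f_1-f_2)_{L^2}$, estimate it via $H^{s-1}\hookrightarrow L^\infty$, and integrate. Your averaging of the two decompositions is just an explicit way of invoking the anti-symmetry the paper cites, and your $\sqrt{\phi^2+\varepsilon^2}$ argument makes precise what the paper describes as ``formally dividing by $\|f_1-f_2\|_{L^2}$''.
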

\begin{proof}
Using the anti-symmetry of $(B(u_1-u_2,\cdot),\cdot)_{L^2}$ we have, for $t\in[0,T]$,
\begin{multline*}
\frac{\d}{\d t} \|f_1-f_2\|_{L^2}^2\leq |(B(u_1-u_2,f_1+f_2),f_1-f_2)_{L^2}| + 2|(g_1-g_2,f_1-f_2)|\\
\leq C \|f_1+f_2\|_{H^s}\|u_1-u_2\|_{L^2}\|f_1-f_2\|_{L^2}+2\|g_1-g_2\|_{\Sigma_0}\|f_1-f_2\|_{L^2}\\
\leq C\|f_1+f_2\|_{\Sigma_s}\|u_1-u_2\|_{\Sigma_0}\|f_1-f_2\|_{L^2} +2\|g_1-g_2\|_{\Sigma_0}\|f_1-f_2\|_{L^2}
\end{multline*}
Where $C$ depends on the embedding $H^{s-1}\hookrightarrow L^\infty$.  Formally dividing by $\|f_1-f_2\|_{L^2}$ and integrating the resulting inequality gives \re{lemDifferencesTransport:bound1} or \re{lemDifferencesTransport:bound2} depending on the choice of $g_1$ and $g_2$. Justifying this last step is straightforward.
\end{proof}

We are now in a position to prove the main result.
\begin{proof}[Proof of Theorem \ref{thmExistUniq}.]
Fix $s>n/2+1$ and let $C_3$, $C_4$ be the constants in \re{lemW:bound1}, \re{lemTransport:bound1} (from Lemmas \ref{lemW} and \ref{lemTransport}) respectively. Fix $M>\|u_0\|_{H^s}$ and $T>0$ so that 
\[
\exp(C_4TM)\|u_0\|_{H^s}\left(\frac{C_3}{C_4}[\exp(C_4TM)-1]+1\right)\leq M.
\]
 Let $u\in B_M(T)$ be a divergence free function and let $\eta$ be the solution of \re{eqTransport} for the flow $u$ with initial data $\eta_0=0$ and forcing $g=u$. Let $v$ be the solution for initial data $v_0=u_0$ with $g=0$. Define $Su:=\PP[(\nabla \eta)^\ast v + v]$, then by Lemmas \ref{lemW} and \ref{lemTransport},
\eqnb\label{thmExistUniq:B_MBnd}
\|Su(t)\|_{H^s}\leq \exp(C_4tM)\|u_0\|_{H^s}\left(\frac{C_3}{C_4}[\exp(C_4tM)-1]+1\right)\leq M
\eqne   
for all $t\in[0,T]$. Hence $S:B_M(T)\to B_M(T)$. Note that $Su(\cdot,0)=u_0$ even if $u(\cdot,0)\neq u_0$.

 We next show that $S$ is a contraction on $B_M(T)$ in the $L^2$ norm if $T$ is sufficiently small. For $u_1$, $u_2\in B_M(T)$ we construct $v_i$ and $\eta_i$ from $u_i$ as above for $i=1,2$ with $v_1(\cdot,0)=v_2(\cdot,0)=u_0$. Now 
\eqnb\label{thmExistUniq:Contraction}
\begin{aligned}
\|Su_1-Su_2\|_{L^2}&\leq C_a\|\eta_1-\eta_2\|_{L^2}+C_b\|v_1-v_2\|_{L^2}\\
&\leq(C_c\|v_1+v_2\|_{\Sigma_s}+C_d\|\eta_1+\eta_2\|_{\Sigma_s}+C_e) T\|u_1-u_2\|_{\Sigma_0}\\
&\leq C(u_0,M,T)\|u_1-u_2\|_{\Sigma_0},
\end{aligned}
\eqne
where $C_a,\ldots,C_e$ denote various constants arising from the application of Lemmas \ref{lemW}, \ref{lemTransport} and \ref{lemDifferencesTransport}. Keeping careful track of the constants shows that $C(u_0,M,T)$ is given by the formula
\eqnb\label{thmExistUnique:Const}
\begin{aligned}
C(u_0,M,T)&\coloneqq2T\left[\left(C_5(C_3'M+1)\|u_0\|_{H^s}+\frac{C_3'C_5M}{C_4}\right)\exp(C_4TM)\right.\\
&\left.\oneChar+C_3'M\left(\frac{1}{2}-\frac{C_5}{C_4}\right)\right]
\end{aligned}
\eqne
Where $C_3'$, $C_4$, $C_5$ are the constants from Lemmas \ref{lemW}, \ref{lemTransport} and \ref{lemDifferencesTransport} respectively.
  Taking the supremum of \re{thmExistUniq:Contraction} with respect to $t$ and choosing $T>0$ small enough, we see that $S$ is a contraction in the required sense.

We conclude that $S$ has a unique accumulation point $u$, in the closure of $B_M$ with respect to $\|\cdot\|_{\Sigma_0}$. Since $B_M(T)$ is convex and closed in $\Sigma_s$ it is weakly closed, hence $u\in B_M(T)$ is a fixed point of $S$. A fixed point of $S$, along with associated back-to-labels map and virtual velocity, clearly give a  solution to the Eulerian-Lagrangian formulation of the Euler equations with the required regularity. The contraction argument gives uniqueness in $B_M(T)$ and it remains to prove that we have uniqueness in $\Sigma_s(T)$.

Since $S$ is a contraction on $B_M(\widetilde T)$ for any $\widetilde T\in(0,T]$, we have by continuity of $\|u(t)\|_{H^s}$, that if $u'$, $A'$ and $v'$ also satisfy (\ref{eqELA}--\ref{eqELv}) with $u'\in\Sigma_s(T)$, then $u(t)=u'(t)$ when $0\leq t\leq\min(T,\inf\{r: \|u'(r)\|_{H^s}=M\})$. 

Now we know that for all $k\in\NN$ there exists $T_k\leq T$ such that $S$ is a contraction on $B_{M+1/k}(T_k)$ and we may assume $T_k\to T$ as $k\to\infty$. By the previous observation, this means that $u$ is the unique solution in $\Sigma_s(T-\varepsilon)$ for all $\varepsilon>0$, hence by continuity $u$ is the unique solution in $\Sigma_s$ as required.  

The proof that $u\in C^1([0,T];H^{s-1})$ uses the same trick as Lemma \ref{lemW} to save a spatial derivative (we have only shown that $\nabla \eta_t\in H^{s-2}$, which might otherwise limit the regularity of $u$). 
By definition $u=\PP[(\nabla \eta)^\ast v+v]$. We use \re{lemW:eq2} from the proof of Lemma \ref{lemW}. Precisely we have
\eqnbs
\begin{aligned}
&\frac{1}{h}\left\|u(t+h)-u(t)-h\PP[(\nabla \eta(t))^\ast\p_tv(t)+\p_tv(t)+(\nabla v(t))^\ast\p_t \eta(t)]\right\|_{H^{s-1}}\\
&\oneChar\leq\frac{1}{2h}\left\|\PP[(\nabla \eta(t+h)+\nabla \eta(t))^\ast(v(t+h)-v(t)- h\p_tv)]\right\|_{H^{s-1}}\\
&\oneChar\oneChar+\frac{1}{2h}\left\|\PP[(\nabla v(t+h)+\nabla v(t))^\ast(\eta(t+h)-\eta(t)- h\p_t\eta)]\right\|_{H^{s-1}}\\
&\oneChar\oneChar+\frac{1}{2}\|\PP[(\nabla \eta(t+h)-\nabla \eta(t))^\ast\p_t v(t)]\|_{H^{s-1}}\\
&\oneChar\oneChar+\frac{1}{2}\|\PP[(\nabla v(t+h)-\nabla v(t))^\ast\p_t \eta(t)]\|_{H^{s-1}}\\
&\oneChar\oneChar+\frac{1}{h}\|v(t+h)-v(t)-h\p_tv(t)\|_{H^{s-1}}.
\end{aligned}
\eqnes
Since $H^{s-1}$ is an algebra and $\eta, v\in C^0([0,T];H^{s})\cap C^1([0,T];H^{s-1})$, the right-hand side vanishes as $h\to 0$. Therefore $u\in C^1([0,T];H^{s-1})$ and 
\[
\p_t u= \PP[(\nabla \eta(t))^\ast\p_tv(t)+\p_tv(t)(\nabla v(t))^\ast\p_t \eta(t)].
\]
\end{proof}

\section{An Alternative Iteration}
Here we exhibit an alternative proof of existence and uniqueness for (\ref{eqELA}--\ref{eqELv}), which is based on contractions with respect to $A$ rather than $u$. The extra technicality in this approach is contained in the following lemma, which is proved in an appendix. We will denote the identity map on ${\TT^n}$ by $\iota$ and use the correspondence between maps $\TT^n\to\RR^n$ and $\TT^n\to\TT^n$ without comment.

\setcounter{AppendixLemmaNumber}{\value{lem}}
\begin{lemma}\label{lemComposition}
Let $s\in\ZZ$ with $s>\frac{n}{2}+1$ and fix $f, g\in H^s$. If $g+\iota$ is a volume preserving map then $f\circ (g+\iota)\in H^s$ and
\eqnb\label{lemComposition:bound1}
\|f\circ (g+\iota)\|_{H^s}\leq C_6\|f\|_{H^s}(\|g\|_{H^s}+(2\pi)^n)^s
\eqne
for some $C_6>0$ depending only on $s$ and the constants from some Sobolev embeddings.
\end{lemma}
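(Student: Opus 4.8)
The plan is to estimate $\|f\circ(g+\iota)\|_{H^s}$ by writing $\Phi := g + \iota$ and controlling each derivative $\partial^\alpha(f\circ\Phi)$ for $|\alpha|\le s$ via the Faà di Bruno / chain-rule expansion. The point of restricting to integer $s$ is precisely that $H^s$ is then characterised by the $L^2$-boundedness of all classical partial derivatives up to order $s$, so it suffices to bound $\|\partial^\alpha(f\circ\Phi)\|_{L^2}$ for each multi-index $\alpha$ with $|\alpha|\le s$. Applying the chain rule repeatedly, $\partial^\alpha(f\circ\Phi)$ is a finite sum of terms of the form $\bigl((\partial^\beta f)\circ\Phi\bigr)\cdot\prod_{j}\partial^{\gamma_j}\Phi$, where $1\le|\beta|\le|\alpha|\le s$ and the multi-indices $\gamma_j$ satisfy $\sum_j(|\gamma_j|-1) = |\alpha| - |\beta|$, in particular $|\gamma_j|\le s$ for each $j$ (and $\partial^{\gamma_j}\Phi = \partial^{\gamma_j} g$ once $|\gamma_j|\ge 2$, while the first-order factors are entries of $\nabla g + I$). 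So the number of factors from $\Phi$ is at most $|\beta|\le s$.

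The two substantive ingredients are then: (i) a bound on $(\partial^\beta f)\circ\Phi$ in terms of $\|\partial^\beta f\|$ that uses the volume-preserving hypothesis, and (ii) a product estimate for the factors $\prod_j \partial^{\gamma_j}\Phi$. For (i), since $\Phi$ is volume preserving, the change-of-variables formula gives $\|h\circ\Phi\|_{L^2(\TT^n)} = \|h\|_{L^2(\TT^n)}$ for any $h\in L^2$; this is exactly where "volume preserving" is used and is what keeps the right-hand side linear in $\|f\|_{H^s}$. However $\partial^\beta f$ with $|\beta|=s$ is only in $L^2$, not in $L^\infty$, so in the terms where it is multiplied by Jacobian factors I instead first take $L^2$ of the whole product by putting the lowest-order $f$-derivative in $L^2$ and every Jacobian factor in $L^\infty$; when $|\beta|=s$ all $\gamma_j$ factors are first-order, i.e. entries of $\nabla g + I$, which lie in $H^{s-1}\hookrightarrow L^\infty$, with $\|\nabla g + I\|_{L^\infty}\lesssim \|g\|_{H^s} + 1$. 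For (ii), for a general term with factors $\partial^{\gamma_j}\Phi$, I distribute norms using the Sobolev algebra / interpolation structure of $H^{s-1}$: each factor $\partial^{\gamma_j}\Phi$ with $|\gamma_j|\le s$ has at least $s-|\gamma_j| \ge 0$ derivatives to spare relative to $H^s$, and a standard Gagliardo–Nirenberg argument bounds a product of such factors (together with $(\partial^\beta f)\circ\Phi$) in $L^2$ by a constant times $\|f\|_{H^s}\prod(\|g\|_{H^s}+1)$, with at most $|\beta|\le s$ Jacobian factors appearing; hence the power $(\|g\|_{H^s}+(2\pi)^n)^s$. (The $(2\pi)^n$ rather than $1$ is a harmless adjustment absorbing the constant from $\|I\|_{L^\infty}$ and the volume of $\TT^n$ in the Sobolev embeddings.)

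The main obstacle is handling the low-regularity factor $(\partial^\beta f)\circ\Phi$ when $|\beta|$ is large: one cannot afford to put it in $L^\infty$, so the bookkeeping must ensure that in every chain-rule term there is a valid Hölder split in which $(\partial^\beta f)\circ\Phi$ sits in $L^2$ (where composition with the volume-preserving $\Phi$ is an isometry) and all remaining factors $\partial^{\gamma_j}\Phi$ sit in spaces, controlled by $\|g\|_{H^s}$, whose Hölder exponents are compatible. The constraint $\sum_j(|\gamma_j|-1)=|\alpha|-|\beta|$ together with $s > n/2 + 1$ is exactly what makes this possible: one checks that the total "derivative deficit" $s - |\beta| + \sum_j(s - |\gamma_j|)$ is large enough, relative to $n/2$, to run the Gagliardo–Nirenberg / Hölder estimate in every case, the tightest case being $|\beta| = s$ handled separately as above. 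Collecting the finitely many terms (the number depending only on $s$ and $n$) yields the stated bound with $C_6$ depending only on $s$ and the relevant Sobolev embedding constants.
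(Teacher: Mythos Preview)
Your overall plan (Fa\`a di Bruno expansion, H\"older splitting, volume preservation for the composed factor) is the same as the paper's, but there is a genuine gap in the H\"older split you describe. You insist that in \emph{every} term the factor $(\partial^\beta f)\circ\Phi$ sits in $L^2$, on the grounds that volume preservation makes composition an $L^2$ isometry. This forces the remaining product $\prod_j \partial^{\gamma_j}\Phi$ into $L^\infty$, and that is not always achievable. Concretely, take $n=3$, $s=3$, $|\alpha|=3$, $|\beta|=1$: then there is a single factor $\partial^{\gamma_1}g$ with $|\gamma_1|=3$, which is only in $H^{s-3}=L^2$, certainly not in $L^\infty$. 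Your ``derivative deficit'' count is heuristically right, but it cannot be cashed in with the $f$-factor pinned to $L^2$.

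The fix is simple but essential: a volume-preserving change of variables is an isometry on \emph{every} $L^q$, not just $L^2$. Hence for $|\beta|<s$ one first uses the Sobolev embedding $H^{s-|\beta|}\hookrightarrow L^q$ (with $q$ large, up to $\infty$ if $s-|\beta|>n/2$) and then volume preservation to get $\|(\partial^\beta f)\circ\Phi\|_{L^q}=\|\partial^\beta f\|_{L^q}\lesssim\|f\|_{H^s}$. This frees up H\"older room so that the product $\prod_j \partial^{\gamma_j}\Phi$ need only lie in $L^p$ with $1/p+1/q=1/2$; the paper's appendix carries out exactly this bookkeeping and verifies that the required $p$ is admissible precisely when $s>n/2+1$. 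A secondary point: you also need a density/approximation step (smooth $f_k,g_k\to f,g$ in $H^s$, with $g_k+\iota$ almost volume preserving) to justify applying the classical chain rule and then pass to the limit via weak $H^s$ compactness plus strong $L^2$ convergence of $f_k\circ(g_k+\iota)$; you should not apply Fa\`a di Bruno directly to $f,g\in H^s$.
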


This allows us to write a second proof of existence and uniqueness of solutions in $\Sigma_s$ for $s>n/2+1$ in the case $s\in \ZZ$.

Fix $u_0 \in H^s$ and $M>0$ and suppose $\eta\in B_M(T)$ for some $T>0$ such that $\eta(t) +\iota$ is volume-preserving for all $t\in[0,T]$. Define $u$ and $v$ via $v=u_0\circ (\eta+\iota)$ and  $u=\PP[(\nabla \eta)^\ast v + v]$. Construct $\eta'$, the iterate of $\eta$ by solving 
\[
\p_t \eta'+(u\cdot\nabla)\eta'=-u, \; \eta'(x,0)=0.
\]
By Lemmas \ref{lemW}, \ref{lemTransport} and \ref{lemComposition} we have
\[
\|\eta'\|_{\Sigma_{s}}\leq\frac{1}{C_4}\left[\exp(C_4C_6(C_3M+1)(M+(2\pi)^n)^s\|u_0\|_{H^s}T)-1\right].
\]
Hence for $T$ small enough, we may assume $\eta'\in B_M(T)$ and since $\nabla \cdot u=0$ we also have that $\eta' +\iota$ is volume preserving.

Now suppose that $\eta_1$, $\eta_2\in B_M(T)$ and let $\eta_1'$, $\eta_2'$ be the respective iterates then 
\[
\|\eta_1'-\eta_2'\|_{\Sigma_0}\leq 2(C_5M+1)(C_3'M+(C_3'M+1)C_{\mathrm{Lip}}) T\|\eta_1-\eta_2\|_{\Sigma_0},
\]
by Lemmas \ref{lemW} and \ref{lemDifferencesTransport}. Here $C_{\mathrm{Lip}}$ is the Lipschitz constant of $u_0$. It follows that, for small enough $T$, this iteration procedure is a contraction on $B_M(T)$ in the $L^2$ norm. Existence and uniqueness of solutions now follows using the same steps as in the previous method.

\section{Conclusions}
Constantin found that $C^{1,\mu}$ initial data gives rise to unique solutions with $C^{1,\mu}$ trajectories for a short time. In contrast, we have seen that for $s>n/2+1$, there exists a local solution which is continuous in time into $H^{s}$ and $C^1$ into $H^{s-1}$ with trajectories in $C^1([0,T]\x{\TT^n})$. This regularity is enough to deduce that such solutions are also solutions of the classical Euler equations.

This paper is partly to prepare the ground for a similar treatment of the Navier--Stokes equations. Once again it is Constantin \cite{Const_ELNS_2001,Const_2003} who has put forward an Eulerian-Lagrangian form for the viscous case. In that formulation diffusive terms appear in the equations for the back-to-labels map and the virtual velocity and in the aforementioned papers some a priori information about that system and its relationship to the classical Navier--Stokes equations are proved. We plan to consider a system for Navier--Stokes with a non-diffusive back to labels map and seek to prove a local existence result analogous to the one exhibited here.

\appendix
\section{Compositions in $H^s$ }\label{Appendix1}
In this appendix we prove Lemma \ref{lemComposition}, which gives bounds on the compositions $H^s$ functions with certain volume-preserving locally $H^s$ functions where $s\in\ZZ$ with $s>\frac{n}{2}$.

To begin with we consider $g_i\in H^s$ and multi indices $\beta_i$ with $|\beta_i|\in[1,s]$ for $i=1,\ldots,\ell$. We call $p\in[1,\infty]$ \textit{admissible} for $(\beta_i)_{1\leq i\leq\ell}$ if there exists a constant $C>0$ independent of $(g_i)_{1\leq i\leq\ell}$ such that
\eqnb\label{app1:eq1}
\left\|\prod_{i=1}^\ell \D^{\beta_i}g_i\right\|_{L^p}\leq C\prod_{i=1}^\ell \|g_i\|_{H^s}.
\eqne
Of course $p$ is admissible if there exist $q_1,\ldots,q_\ell\in[1,\infty)$ such that $H^{s-|\beta_i|}\hookrightarrow L^{q_i}$ for each $i$ and
\[
\sum_{i=1}^\ell \frac{1}{q_i}=\frac{1}{p},
\]
or $p=\infty$ and $q_i=\infty$ for all $i$. We may assume, without loss of generality that there are constants $k_1$ and $k_2$ with $0\leq k_1\leq k_2\leq \ell$ such that 
\[\left\{\begin{array}{l}
s-|\beta_i|\in[0,n/2) \mbox{ for } 1\leq i\leq k_1\\
s-|\beta_i|=n/2 \mbox{ for } k_1+1\leq i\leq k_2\\
s-|\beta_i|>n/2\mbox{ for } k_2+1\leq i\leq \ell\\
\end{array}\right.
\]
So we have 
\[
\left\|\prod_{i=1}^{k_1}D^{\beta_i}g_i\right\|_{L^p}\leq C\prod_{i=1}^{k_1}\|g_i\|_{H^s}
\]
for
\[
\frac{1}{p}\in\left[\sum_{i=1}^{k_1}\frac{n-2(s-|\beta_i|)}{2n},\frac{k_1}{2}\right].
\]
Moreover 
\[
\left\|\prod_{i=k_1+1}^{k_2}D^{\beta_i}g_i\right\|_{L^p}\leq C\prod_{i=k_1+1}^{k_2}\|g_i\|_{H^s}
\]
for $p\in[2,\infty)$. Lastly,
\[
\left\|\prod_{i=k_2+1}^{\ell}D^{\beta_i}g_i\right\|_{L^\infty}\leq C\prod_{i=k_2+1}^{\ell}\|g_i\|_{H^s}.
\]
Combining these observations we see that $p$ is admissible if
\eqnb\label{app1:LpInterval}
\frac{1}{p}\in\left(\sum_{i=1}^{k_1}\frac{n-2(s-|\beta_i|)}{2n},\frac{\ell}{2}\right].
\eqne
or if $k_1=k_2$ then $p$ is still admissible if
\eqnb\label{app1:LpInterval2}
\frac{1}{p}=\sum_{i=1}^{k_1}\frac{n-2(s-|\beta_i|)}{2n},
\eqne
furthermore $p=\infty$ is admissible if $k_1=k_2=0$.

Note that if $p\in[1,\infty]$ is admissable and $f_i:{\TT^n}\to\RR^n$ are linear maps   then we have (rather crudely)
\eqnb\label{app1:LinearAddition}
\left\|\prod_{i=1}^\ell D^{\beta_i}(g_i+f_i)\right\|_{L^p}\leq C\prod_{i=1}^\ell\|g_i\|_{H^s}+\|f_i\|_{\mathrm{op}}(2\pi)^{n/q_i}.
\eqne

In the proof of the lemma below, we will need the fact that if $s>\frac{n}{2}$ and $\sum_{i=1}^\ell|\beta_i|\leq s$ then $p=2$ is admissible for $(\beta_i)_{1\leq i\leq\ell}$.  Furthermore, we will need to show that if $s>n/2+1$ then there exists an admissible $p>\frac{n}{s-\ell}$ and that $p=\infty$ is admissible if $s=\ell>n/2+1$.

For the first claim, note that if $k_1=0$ or $k_1=1$ then $p=2$ is clearly admissible. Otherwise, if $1< k_1\leq\ell$ and $s>n/2$, we have the following calculation:
\eqnb\label{app1:eqClaim1}
\sum_{i=1}^{k_1} n-2(s-|\beta_i|)\leq {k_1} n -2{k_1} s+2s
=(k_1-1)(n-2s)+n < n
\eqne
so $p=2$ is admissible. For the second claim, observe that if $s>n/2+1$ then
\eqnb\label{app1:eqClaim2}
\sum_{i=1}^{k_1} n-2(s-|\beta_i|)< 2\sum_{i=1}^{k_1}|\beta_i| -2k_1\leq 2(s-k_1)-2\sum_{i=k_1+1}^\ell|\beta_i|\leq 2(s-\ell),
\eqne
where the middle inequality uses the assumption that $\sum_{i=1}^\ell|\beta_i|\leq s$. Hence there exists an admissible value $p>\frac{n}{s-\ell}$, if $s-\ell>0$. If $s=\ell $ then necessarily, $|\beta_i|=1$ for $i=1,\ldots,\ell$ hence $p=\infty$ is admissible by \re{app1:LpInterval2}.
\setcounter{TempCounter1}{\value{lem}}
\setcounter{lem}{\value{AppendixLemmaNumber}}
\begin{lemma}\label{app1:lemComposition}
Let $s\in\ZZ$ with $s>\frac{n}{2}+1$ and fix $f, g\in H^s$. Denote the identity map on ${\TT^n}$ by $\iota$. If $g+\iota$ is a volume  preserving map then $f\circ (g+\iota)\in H^s({\TT^n})$ and
\eqnb\label{app1:lemComposition:bound1}
\|f\circ (g+\iota)\|_{H^s}\leq C\|f\|_{H^s}(\|g\|_{H^s}+(2\pi)^n)^s
\eqne
for some $C>0$ depending only on $s$ and the constants from some Sobolev embeddings. 
\end{lemma}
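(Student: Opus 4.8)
The plan is to reduce everything to the multivariate Faà di Bruno formula together with the admissibility facts established in the preamble to this appendix. Write $\Phi:=g+\iota$, which is continuous since $s>\tfrac n2$, so that $f\circ\Phi$ is well defined pointwise. First I will record that for $f\in C^\infty(\TT^n)$ and any multi-index $\alpha$ with $1\le|\alpha|\le s$ one has, as an identity of distributions,
\[
\D^\alpha(f\circ\Phi)=\sum c\,\bigl((\D^\gamma f)\circ\Phi\bigr)\prod_{i=1}^{|\gamma|}\D^{\beta_i}\Phi_{j_i},
\]
where the finite sum runs over $1\le|\gamma|\le|\alpha|$, indices $j_i\in\{1,\dots,n\}$, multi-indices $\beta_i$ with $|\beta_i|\ge1$ and $\sum_{i=1}^{|\gamma|}\beta_i=\alpha$ (so that $\sum_i|\beta_i|=|\alpha|\le s$), and $c$ are combinatorial constants. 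This is classical for smooth $\Phi$ and extends to $\Phi\in H^s$ by approximating $g$ in $H^s$ by smooth functions and passing to the limit, the convergence in $H^s$ being controlled by the product estimates below (together with uniform bounds on the $\D^\gamma f$). Note that volume preservation is \emph{not} needed here; it will enter only through the $L^p$-isometry $\|h\circ\Phi\|_{L^p}=\|h\|_{L^p}$.

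For the main bound I estimate each term of the sum in $L^2$. Fix a term and put $\ell:=|\gamma|$. By Hölder with $1/q+1/p=1/2$,
\[
\Bigl\|\bigl((\D^\gamma f)\circ\Phi\bigr)\prod_{i=1}^{\ell}\D^{\beta_i}\Phi_{j_i}\Bigr\|_{L^2}\le\bigl\|(\D^\gamma f)\circ\Phi\bigr\|_{L^q}\Bigl\|\prod_{i=1}^{\ell}\D^{\beta_i}\Phi_{j_i}\Bigr\|_{L^p},
\]
and volume preservation gives $\|(\D^\gamma f)\circ\Phi\|_{L^q}=\|\D^\gamma f\|_{L^q}$. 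Since $|\beta_i|\ge1$, each $\D^{\beta_i}\iota_{j_i}$ is a constant ($0$ or $1$), so $\D^{\beta_i}\Phi_{j_i}=\D^{\beta_i}g_{j_i}+\mathrm{const}$, and \re{app1:LinearAddition} applies: whenever $p$ is admissible for $(\beta_i)_{1\le i\le\ell}$,
\[
\Bigl\|\prod_{i=1}^{\ell}\D^{\beta_i}\Phi_{j_i}\Bigr\|_{L^p}\le C\prod_{i=1}^\ell\bigl(\|g\|_{H^s}+(2\pi)^n\bigr)=C\bigl(\|g\|_{H^s}+(2\pi)^n\bigr)^\ell,
\]
where I have absorbed the factors $(2\pi)^{n/q_i}$ into $(2\pi)^n$ using $q_i\ge1$. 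It remains to pick, in each case, an admissible $p$ whose Hölder conjugate $q$ is a legal exponent for $H^{s-\ell}$.

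If $s-\ell>\tfrac n2$, take $q=\infty$ and $p=2$: then $H^{s-\ell}\hookrightarrow L^\infty$ gives $\|\D^\gamma f\|_{L^\infty}\le C\|f\|_{H^s}$, and $p=2$ is admissible since $\sum_i|\beta_i|=|\alpha|\le s$. If $s=\ell$, then necessarily $|\beta_i|=1$ for every $i$, whence $s-|\beta_i|=s-1>\tfrac n2$ and $p=\infty$ is admissible by \re{app1:LpInterval2}; take $q=2$ and use only $\D^\gamma f\in L^2$. In the remaining range $0<s-\ell\le\tfrac n2$, the preamble supplies an admissible $p>\tfrac{n}{s-\ell}$; then $1/q:=\tfrac12-\tfrac1p$ satisfies $\tfrac1q>\tfrac12-\tfrac{s-\ell}{n}$, which is exactly the (appropriately strict) condition for $H^{s-\ell}\hookrightarrow L^q$, so $\|\D^\gamma f\|_{L^q}\le C\|f\|_{H^s}$; one also checks $p>2$, hence $q>2$, so Hölder is legitimate. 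In every case the term is at most $C\|f\|_{H^s}(\|g\|_{H^s}+(2\pi)^n)^\ell$, and since $\ell\le s$ and $(2\pi)^n>1$ this is $\le C\|f\|_{H^s}(\|g\|_{H^s}+(2\pi)^n)^s$. Summing the finitely many terms over all $\alpha$ with $|\alpha|\le s$, and adding the trivial contribution $\|f\circ\Phi\|_{L^2}=\|f\|_{L^2}$ from $\alpha=0$, yields \re{app1:lemComposition:bound1} for $f\in C^\infty$ with a constant depending only on $s$, $n$ and the relevant Sobolev embedding constants; the general case $f\in H^s$ then follows by density, applying the estimate to $f_m-f_{m'}$ for smooth $f_m\to f$ to see that $\{f_m\circ\Phi\}$ is Cauchy in $H^s$ while converging uniformly to $f\circ\Phi$.

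I expect the main obstacle not to be the $H^s$ estimate itself — the combinatorics of admissible $L^p$-exponents has already been carried out above — but rather the rigorous justification in the first paragraph: making sense of the Faà di Bruno expansion and of the membership $f\circ\Phi\in H^s$ when $\Phi$ is merely $H^s$ (hence not $C^{\lceil s\rceil}$), which I handle by the density/approximation argument above and by exploiting that on the torus the identity map contributes only constants under differentiation of order $\ge1$, so that \re{app1:LinearAddition} applies cleanly.
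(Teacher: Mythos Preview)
Your argument is correct, and its analytical core --- the Fa\`a di Bruno expansion followed by H\"older with the admissible exponents worked out in the preamble, together with the same three-case split on the size of $s-\ell$ --- is exactly the paper's. The only real difference is in the density wrapper. The paper approximates $f$ and $g$ \emph{simultaneously} by smooth $f_k,g_k$ with $g_k+\iota$ arranged to be \emph{almost} volume preserving, obtains a uniform $H^s$ bound on $f_k\circ(g_k+\iota)$ (picking up a factor $\sqrt{1+1/k}$ from the Jacobian), and then passes to the weak $H^s$ limit after checking $L^2$ convergence to $f\circ(g+\iota)$. You instead keep $\Phi=g+\iota$ fixed, justify the chain-rule identity for smooth $f$ by approximating $g$ alone (where boundedness of $\D^\gamma f$ makes the limit painless and no volume preservation is needed), and then exploit the linearity of the estimate in $f$ to run a Cauchy-sequence argument directly in $H^s$. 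Your variant has the pleasant feature that the exact volume preservation of $\Phi$ is used throughout, so no approximate-Jacobian bookkeeping is required; the paper's one-shot approximation is slightly shorter but needs the ``almost volume preserving'' device. Both routes are sound and lead to the same constant structure in \re{app1:lemComposition:bound1}.
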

\setcounter{lem}{\value{TempCounter1}}
\begin{proof}
For each $k\in\NN$, consider functions $f_k, g_k\in C^\infty({\TT^n};\RR^n)$ such that $f_k\to f$ in $H^s$ and  $g_k\to g$ in $H^s$.  Without loss of generality we assume that $| |\det\nabla (g_k(x)+x)|-1|<\frac{1}{k+1}$ holds uniformly in $x$.

Now by the chain and Leibniz rules, we see that for a multi-index $\gamma$ with $|\gamma|\leq s$, $D^\gamma(f_k\circ (g_k+\iota))$ is a (weighted) sum with summands of the form
\eqnb\label{app1:lemComposition:derivSummands}
((D^\alpha f_k)\circ (g_k+\iota))\prod_{i=1}^\ell D^{\beta_i}(g_k^{r_i}+x_{r_i}),
\eqne 
where $\ell=|\alpha|\leq|\gamma|$ and $\sum_{i=1}^\ell|\beta_i|=|\gamma|$. Here $g_k^i$ denotes the $i$th vector component of $g_k$. We seek to bound terms of the form \re{app1:lemComposition:derivSummands} in $L^2$ using the preceding observations.

Since $D^\alpha f_k\in H^{s-\ell}$ and $g_k+\iota$ is ``almost volume preserving'' it can be seen that $(D^\alpha f_k)\circ (g_k+\iota)\in L^q$ if 
 \[
\frac{1}{q}\in\left(\frac{1}{2}-\frac{s-\ell}{n},\frac{1}{2}\right]
\] 
with $s-\ell\in(0, n/2]$ or 
\[
\frac{1}{q}=\frac{1}{2}-\frac{s-\ell}{n}
\] when $s-\ell\in(0,n/2)$. Of course, if $s-\ell>n/2$ then $D^\alpha f_k\in L^\infty$. 

To bound \re{app1:lemComposition:derivSummands} in $L^2$ therefore, we need to check that there is an admissible $p$ such that,
\[
\frac{1}{p}\in \left[ 0, \frac{s-\ell}{n}\right).
\]
and that $p=\infty$ is admissible if $s=\ell$. This follows from the claims we proved before the statement of the lemma.

Now we see that 
\[
\|f_k \circ (g_k+\iota)\|_{H^s} \leq C\sqrt{1+1/k} \,\|f_k\|_{H^s}(\|g_k\|_{H^s}+(2\pi)^n)^s
\]
where $C$ depends only on Sobolev embeddings and some combinatorics. Since $f_k$ and $g_k$ converge we may assume that $f_k\circ (g_k+\iota)$ converges weakly in $H^s$. Thus the lemma is proved if we can show that $f_k\circ (g_k+\iota) \to f\circ (g+\iota)$ in $L^2$ for example. This is indeed the case:
\eqnbs
\begin{aligned}
&\|f\circ (g+\iota)-f_k\circ (g_k+\iota)\|_{L^2}\\
&\oneChar\oneChar\oneChar\oneChar \leq \|f\circ (g+\iota) - f\circ (g_k+\iota)\|_{L^2}+\|f\circ (g_k+\iota) -f_k\circ (g_k+\iota)\|_{L^2}\\
&\oneChar\oneChar\oneChar\oneChar\leq C_\mathrm{Lip}\|g-g_k\|_{L^2} + \sqrt{1+1/k}\,\|f - f_k\|_{L^2},
\end{aligned}
\eqnes
where we make use of the fact that $f\in H^s$ is Lipschitz since $s>n/2+1$ and denote by $C_\mathrm{Lip}$ the Lipschitz constant of $f$.
\end{proof}

\end{document}